\definecolor{refcol}{rgb}{0,0,0.8}
\algrenewcommand\algorithmicrequire{\textbf{Input:}}
\algrenewcommand\algorithmicensure{\textbf{Output:}}
\newtheorem{theorem}{Theorem}[section]
\newtheorem{assumption}[theorem]{Assumption}
\newtheorem{definition}[theorem]{Definition}
\newenvironment{proof}{\begin{trivlist}
    \item[\hskip\labelsep{\bf Proof.}]}{$\hfill\Box$\end{trivlist}}
\theoremstyle{plain} \theorembodyfont{\rmfamily}
\newtheorem{example}[theorem]{Example}
\newtheorem{remark}[theorem]{Remark}}
\numberwithin{equation}{section}
\numberwithin{figure}{section}
\numberwithin{table}{section}
\newcommand{\bsi}{{\boldsymbol{i}}}
\newcommand{\Nsub}{{N_{\rm sub}}}
\newcommand{\Nens}{{N_{\rm ens}}}
\newcommand{\Nobs}{{N_{\rm obs}}}
\newcommand{\comment}[1]{}
\newcommand{\spann}{{\mathrm{span}}}
\newcommand{\hPsi}{{\overline{\mathbf{F}}}}
\renewcommand{\hat}{\widehat}
\newcommand{\vertiii}[1]{{\left\vert\kern-0.25ex\left\vert\kern-0.25ex\left\vert #1 
\right\vert\kern-0.25ex\right\vert\kern-0.25ex\right\vert}}
\newcommand{\diff}{\mathrm{d}}
\definecolor{mh}{rgb}{0.1,0.45,0.1}
\definecolor{sw}{rgb}{0,0,1}
\title{Ensemble Kalman Inversion for Image Guided Guide Wire Navigation in Vascular Systems}
\author[1]{Matei Hanu}
\author[2]{Jürgen Hesser}
\author[3]{Guido Kanschat}
\author[2]{Javier Moviglia}
\author[1]{Claudia Schillings}
\author[2]{Jan Stallkamp}
\date{\today}
\affil[1]{\normalsize
  FU Berlin, Department of Mathematics and Computer Science\\
  14195 Berlin, Germany\\
\texttt{$\{$matei.hanu, c.schillings$\}$@fu-berlin.de}
}
\affil[2]{\normalsize
  Mannheim Institute for Intelligent Systems in Medicine, Heidelberg University, Mannheim, Germany\\
\texttt{$\{$juergen.hesser, javier.moviglia, jan.stallkamp$\}$@medma.uni-heidelberg.de}
}
\affil[3]{\normalsize
  Interdisciplinary Center for Scientific Computing, Heidelberg University, Heidelberg, Germany\\
\texttt{kanschat@uni-heidelberg.de}
}
\begin{document}

\maketitle

\begin{abstract}
This paper addresses the challenging task of guide wire navigation in cardiovascular interventions, focusing on the parameter estimation of a guide wire system using Ensemble Kalman Inversion (EKI) with a subsampling technique. The EKI uses an ensemble of particles to estimate the unknown quantities. However since the data misfit has to be computed for each particle in each iteration, the EKI may become computationally infeasible in the case of high-dimensional data, e.g. high-resolution images. This issue can been addressed by randomised algorithms that utilize only a random subset of the data in each iteration. We introduce and analyse a subsampling technique for the EKI, which is based on a continuous-time representation of stochastic gradient methods and apply it to on the parameter estimation of our guide wire system. Numerical experiments with real data from a simplified test setting demonstrate the potential of the method.
\end{abstract}

\section{Introduction} \label{sec:intro}
Vascular diseases account for approximately 40\% of all deaths in Germany
. The widespread use of minimally invasive interventions, such as percutaneous coronary angioplasty (PCA), is evident, with approximately 800,000 procedures performed in Germany in 2020 alone
. These interventions typically involve creating access to a larger artery, such as puncturing the arteria femoralis for femoral access. Subsequently, a guide wire is navigated under angiographic control by manipulating its distal end (extending from the patient) towards the targeted lesion, such as a coronary stenosis.

This navigation process is challenging and needs much experience for achieving a safe procedure. Inappropriate navigation poses risks, for complications such as dissections, which originate from excessive mechanical stress exerted on the vessel walls during the procedure. Therefore, robotic assistance in guide wire navigation is a very promising direction. Taking into account the aforementioned risks, it necessitates a system capable of executing navigation with a minimal amount of mechanical stress on vessel walls, ideally remaining below a user-defined threshold. 
Information regarding the vessel compliance and surface friction of vessel walls is usually unavailable, thus, parallel to the navigation process, it is necessary to learn them while introducing the wire by observing the wire shape over an angiographic imaging. Calculating uncertainties for these parameters and their motion enables the creation of a probabilistic model for assessing the mechanical stress during navigation, facilitating risk stratification.
We propose here a first step towards automated control of instruments, focusing on wire advancement in vascularities. In a simulation setting, utilizing force sensors, which enable precise force measurement, combined with high-resolution imaging, the behavior of the vascular environment is estimated and predicted in a simplified environment. A crucial aspect of the numerical approach is the real-time estimation and prediction of the guide wire position. We therefore focus on the Ensemble Kalman filter for inverse problems (EKI) with a subsampling technique. The EKI is known to work well in practice for the estimation of unknown parameters, particularly in settings close to the Gaussian case. The low computational costs and black-box use of the forward model make the EKI in this setting very appealing. To account for the large amount of data coming from the high resolution images, we suggest a subsampling technique.

\subsection{Literature overview} \label{subsec:literature}
Coronary vessels can be approximated by a rigid but timevariant tubular geometry responding to the heartbeat. For simplicity in algorithm development, we adopt a straightforward representation of the guide wire—a cylindrical rod with a specified elasticity constant—rather than a detailed guide wire model. Sharei et al \cite{Sharei2018} provide an overview of contemporary techniques for guidewire navigation, predominantly utilizing Finite Elements. These techniques include the Euler-Bernoulli beam model, the Kirchhoff rod model, and a Timoshenko beam, with a non-linear variant known as the Crosserat model. High-resolution images will be used to account for the simpler model. In order to estimate the unknown parameters from the model, we consider particle based methods, which have been demonstrated to perform well with low computational costs.
The Ensemble Kalman Filter (EnKF) is a common algorithm in solving inverse problems as well as data assimilation problems and was first introduced in \cite{Evensen2003}. This is due to its simple implementation and its stability when using small ensemble sizes \cite{Bergemann2009,Bergemann2010,Iglesias2014,Iglesias2016,Iglesias_2013,Li2009,Tong2015,Tong2016}. In recent time the continuous time limit of the Ensemble Kalman Inversion(EKI) has been getting more and more attention. And as a result, convergence analysis in the observation space has been formulated in \cite{Bloemker2019,Bloemker2021,Bungert2021,Schillings2017,Schillings2016}. In order to obtain convergence results in the parameter space, we usually have to employ some form of regularisation. In \cite{Tong2020}, for example, the authors analysed and discussed Tikhonov regularisation for the EKI, which will also be our choice of regularisation. Tikhonov regularisation has been getting further attention in the recent past. Analysis for adaptive Tikhonov strategies as well as in stochastic EKI setting has been done \cite{Weissmann2022}. We will also consider the mean-field limit of the EKI for our analysis \cite{Stuart2022,Ding2020}.\\
Due to the large computational costs of classical optimisation algorithms the stochastic gradient descent method has been introduced in \cite{Robbins1951}, and recently further analysed, e.g., \cite{Bottou2016,Nocedal2006}. Additionally to being computationally more efficient it is well suited for escaping local minimisers in non-convex optimisation problems \cite{Choromanska2014,Vidal2017}. For our subsampling scheme we will employ the analysis on the continuous-time limit of the stochastic gradient descent, which was first analysed in \cite{Latz2021}. This work has then been further generalised in \cite{Jin2021}. We will analyse subsampling in EKI for general nonlinear forward operators. An analysis for linear operators has been done in \cite{Hanu_2023}.

\subsection{Contributions and outline} \label{subsec:contributions}
This paper aims to advance robotic assistance for guide wire navigation in cardiovascular systems. We focus on a simplified setting and suggest a subsampling variant of the EKI to estimate the unknown parameters of the simulation and thus, improve the accuracy of predicitions. The key contributions of this article are as follows:
\begin{itemize}
    \item Formulation of a guide wire system with a feedback loop by high resolution images to learn the unknown parameter in the simulation.
    \item Adaption of a subsampling variant of the EKI to estimate the unknown parameters with low computational costs in a black-box setting.
    \item Analysis of a subsampling scheme for the EKI in the nonlinear setting.
    \item Numerical experiments with real data from a wire model.
\end{itemize}

The structure of the remaining article is as follows: In Section~\ref{sec:mathset}, we introduce the mathematical model of the guide wire system as well as formulate the inverse problem to reconstruct an image via our model. We describe the EKI and present results on convergence analysis in section \ref{sec:eki}; before introducing and analysing our subsampling scheme in section \ref{sec:subsampling}. We then proceed to show an numerical experiment in section \ref{sec:numeric} and complete our work with a conclusion in section \ref{sec:conclusion}.

\subsection{Notation} \label{subsec:notation}
We denote by $(\Omega, \mathcal{A}, \mathbb{P})$ a probability space. Let 
$X$ be a separable Hilbert space, and $Y := \mathbb{R}^\Nobs$  representing the \emph{parameter} and \emph{data spaces}, respectively. Here $\Nobs$ denotes the number of real-valued observation variables, 
We define inner products on $\mathbb{R}^n$ $\langle \cdot, \cdot \rangle$ and their associated Euclidean norms $\| \cdot \|$, or weighted inner products $\langle \cdot, \cdot \rangle_{\Gamma}:=\langle \Gamma^{-\frac12}\cdot, \Gamma^{-\frac12}\cdot \rangle$ and their corresponding weighted norms $\| \cdot \|_\Gamma:=\|\Gamma^{-1} \cdot \|$, where $\Gamma \in \mathbb{R}^{n \times n}$ is symmetric and positive definite matrix.

Furthermore, define the tensor product of vectors $x \in \mathbb{R}^n$ and $y \in \mathbb{R}^m$ as $x \otimes y := x y^\top$.

\section{Parameter estimation problem} \label{sec:mathset}

In an abstract setting, the aim is to recover the unknown parameter $u \in X$ from noisy observations $y\in Y$, i.e.
\begin{align}
    y=G(u)+ \eta, \label{eq_IP}
\end{align}
where $\eta \in Y$ is the additive observational noise, and $G: X \rightarrow Y$ is the solution operator of the underlying forward problem. We describe the model for the wire in the following subsection in more detail. Further, we assume that we can observe the state of the wire, i.e. the data $y \in Y := \mathbb{R}^\Nobs$, with $\Nobs \in \mathbb{N}$, corresponds to the image of the wire.

Within the Bayesian framework, we model $u$ and $\eta$ as independent random variables $u: \Omega \rightarrow X$ and $\eta: \Omega \rightarrow Y$, assuming $u \perp \eta$. Furthermore, assuming additive Gaussian noise $\eta \sim \mathrm{N}(0, \Gamma)$, the posterior distribution $\mu^y$ takes the form
\begin{equation}
    \mathrm d \mu^y(u) = \frac{1}{Z} \exp\left(-\frac{1}{2} \|y - Au\|_{\Gamma}^2\right) \mathrm d \mu_0(u),
\end{equation}
where $\mu_0$ represents the prior distribution on the unknown parameters and $Z = \mathbb E_{\mu_0} \exp\left(-\frac{1}{2} \|y - Au\|^2_{\Gamma} \right)$ denotes the normalization constant.

The primary focus lies in the computation of the Maximum a Posteriori (MAP) estimate, a point estimate for the unknown parameters. In the finite-dimensional setting, this corresponds to minimizing the negative log density. Assuming a Gaussian prior on the unknown parameters $u\sim\mathcal N(0, D_0)$, this results in minimizing
\begin{equation} \label{eq_pot_reg}
    \Phi^{\scaleto{\mathrm{reg}}{5pt}}(u):=\frac 12 \|y-G(u)\|_{\Gamma}^2+\frac{\alpha}{2} \|u\|^2_{D_0},
\end{equation}
where $\alpha>0$. We define $C_0=\alpha^{-1}D_0$ and define
\begin{equation}
\tilde G(u)=\begin{pmatrix}G(u) \\ C_0^{-\frac12}u\end{pmatrix},\quad  \tilde y=\begin{pmatrix}y \\ 0\end{pmatrix}\notag
\end{equation}
\eqref{eq_pot_reg} can equivalently formulated as
\begin{equation}\label{eq:minreg}
\Phi^{\scaleto{\mathrm{reg}}{5pt}}(u)=\frac 12 \|\tilde y-\tilde G(u)\|_{\Gamma}^2\,.
\end{equation}

\subsection{Wire model as forward model}\label{subsec:model}
To model the guide wire, we consider cosserat rods. Cosserat rods are used to simulate the dynamic behavior of filaments capable of bending, twisting, stretching, and shearing. The understanding of the dynamics of such objects is important since soft slender structures are prevalent in both natural and artificial systems. Examples include polymers, flagella, snakes, and space tethers. A detailed description of a straightforward and practical numerical implementation can be found in \cite{Gazzola_2018} and \cite{Zhang2019}. We will give a short introduction of the model and variables we consider, however we refer to the above mentioned articles for more details. Furthermore, we consider the mathematical software Pyelastica \cite{Tekinalp_2023}.
\subsubsection{Cosserat rods}
The major assumption for modelling Cosserat rods is that the rod length $L\in\mathbb{R}_{>0}$ is much larger then the radius $r\in\mathbb{R}_{>0}$, i.e., $(L\gg r)$.
The rod position is given by the centerline $r(s,t)$, where $s\in\left[0,L\right]$ describes a position on the rod and $t$ denotes the time. The position of the rod is given in the $3d$ Euclidean space.\\
We express vectors by the local (Lagrangian) frames $x=x_1d_1+x_2d_2+x_3d_3$, here $d_1$ and $d_2$ span the binormal plane and $d_3$ points along the centerline tangent. Furthermore, we define $Q=\{d_1,d_2,d_3\}$.\\

The cosserat rod dynamics at each cross section are described by the following differential equations:
\begin{itemize}[label={}]
    \item Linear momentum
        \begin{align*}
            \rho A \cdot \partial_t^2 \bar{r} = \partial_s\left(\frac{Q^TS\sigma}{e}\right)+e\bar{f}.
        \end{align*}\\
    \item Angular momentum
        \begin{align*}
            \frac{\rho I}{e}\partial_t\omega&=\partial_s\left(\frac{B\kappa}{e^3}+\frac{\kappa\times B\kappa}{e^3} \right)+\left(Q\frac{\bar{r}_s}{e}\times S\sigma\right)+\\
            &\left(\rho I\cdot\frac{\omega}{e}\right)\times \omega+\frac{\rho I \omega}{e^2}\cdot \partial_t e+ ec,
        \end{align*}
    \item And boundary conditions for position $r_i(t=0)=r_0$ and velocity $v_i(t=0)=v_0$.
\end{itemize}
The variables are: stretch ratio $e=\frac{\diff s}{\diff \hat{s}}\in \mathbb{R}$, where $s\in \mathbb{R}$ denotes the deformed configuration and $\hat{s}\in \mathbb{R}$ the reference configuration, cross-section area: $A=\frac{\bar{A}}{e}\in \mathbb{R}^{3\times 3}$, bending-stiffness matrix: $B=\frac{\bar{B}}{e^2}\in \mathbb{R}^{3\times 3}$, shearing-stiffness matrix: $S=\frac{\bar{S}}{e}\in \mathbb{R}^{3\times 3}$, second area moment of inertia: $I=\frac{\bar{I}}{e^2}\in \mathbb{R}^{3\times 3}$, local orientation of the rod $\bar{r}_s=e\bar{t}\in \mathbb{R}^{3}$, where $\bar{t}$ is a unit tangent vector, $E\in \mathbb{R}$ elastic Young's modulus, $G\in \mathbb{R}$ shear modulus, $I_i\in \mathbb{R} (i=1,2,3)$ second area moment of inertia, $\alpha_c=4/3$, external force $f\in \mathbb{R}^3$, couple line density $c\mathbb{R}^3$, mass per unit length $\rho\in \mathbb{R}$, curvature of the vector $\kappa\in \mathbb{R}^3$, angular velocity of the rod $\omega\in \mathbb{R}^3$, shear strain vector $\sigma\mathbb{R}^3$ and translational velocity $\bar{v}=\partial_t \bar{r} \mathbb{R}^3$. Here $B$ and $S$ depend on $G,E,I_i$ and $\alpha_c$.

\subsection{Experimental setup}\label{subsec:experiment}

In order to analyze and evaluate the physical behavior of the guide wire when applying a force in a static and dynamic state, two different setups have been carried out. In the first case, in a static analysis, a Radifocus Guide Wire M Standard type 0.035" guide wire (Terumo, Tokyo Japan) held in a vertical position against a surface (Fig. \ref{fig:static}) was used. A total of 47 images of the wire every 20mm of displacement have been photographed with a Full HD webcam camera. In this way it can observed its deformation due to its own weight for different positions.

\begin{figure}[H]
\centering
\captionsetup{width=.9\linewidth}
\includegraphics[scale=0.6,clip]{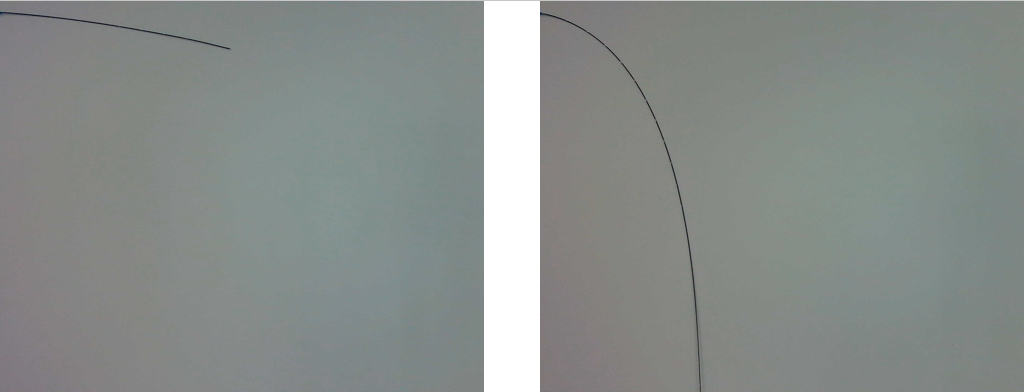}
\caption{Set up for static analysis. 200mm offset (left) 940mm offset (right).}
\label{fig:static}
\end{figure}

On the other hand, to evaluate the dynamic parameters of the wire, an electrical drive system has been designed. This consists of two degrees of freedom (axial and radial) and allows controlling both its position and its speed of movement. A diagram of the device built with its most representative components is found in Fig. \ref{fig:dynamic}. As can be seen, it consists of two motors, Motor 1 generates the axial movement and is contained within and fixed to a rotor. This itself is driven by Motor 2 and is what generates the radial movement. The prototype is controlled by a Raspberry PI 4 and was programmed using the framework ROS (Robot Operating System) for future sensor integrations and applications. The data extracted from the experiment were 80 images obtained every half second at an axial speed of 20mm/s through the phantom of a size of 100mm x 100mm shown in figure \ref{fig:dynamic}, which has three circles of different radii in different positions. This allows knowing the deformation of the wire in dynamic behavior. 

\begin{figure}[H]
\centering
\captionsetup{width=.9\linewidth}
\includegraphics[scale=0.75,clip]{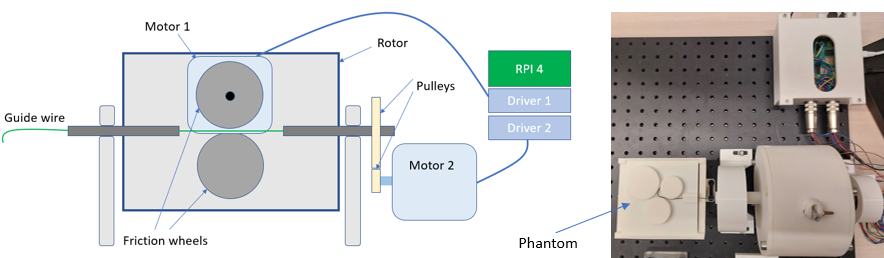}
\caption{Set up for dynamic analysis. Guided wire drive system diagram (left). Prototype of the drive system with phantom built in the laboratory to evaluate the guided wire in a dynamic state (right).}
\label{fig:dynamic}
\end{figure}

\subsection{Optimisation problem} \label{subsec:optim}

Our aim is to reproduce images that are obtained through the experimental setup described in section \ref{subsec:experiment} by solving the differntial equations described in section \ref{subsec:model}. We will consider two degrees of freedom, the density of the wire $\rho$ and the elastic Young modulus $E$ and assume that all other parameters are given. Hence, we focus on estimating the MAP of the inverse problem in order to solve \eqref{eq:minreg}, where $u=(\rho,E)^T\in\mathbb{R}^d$ with $d=2$.  
After solving the cosserat-rod ODEs, described in section \ref{subsec:model}, we need to perform additionally an image segmentation of the end position of the rod, Since the data $y$ comes in the form of an image.\\ We split the forward model into two steps
\begin{enumerate}
    \item Fix one end position of the rod and solve the cosserat rod model for the application of a constant force on the other end of the rod up until a fixed time $t_{end}$, given the parameter $u\in\mathbb{R}^d$.\\
    We define:
        \begin{align*}
            \mathcal{E}:\mathbb{R}^d&\to \mathbb{R}\times\mathbb{R}^3\\
            x&\mapsto \left(r(s,t),Q(s,t)\right),
        \end{align*}
        where $s\in\left[0,L\right], t\in\left[0,t_{end}\right], r(s,t)$ describes the center line at position $s$ and time $t$. And $Q(s,t)$ describes the orientation frame.
    \item Image segmentation: this part consists of the following steps
    \begin{enumerate}[(a)]
        \item We depict the last position of the rod as an image in 2D space, i.e., define
        \begin{align*}
            f_{img}:\mathbb{R}\times\mathbb{R}^3&\to \left(\mathbb{R}^{d_x\times d_y}\right)^3\\
            \left(r(s,t_{end}),Q(s,t_{end})\right)&\mapsto f_{img}\left(r(s,t_{end}),Q(s,t_{end})\right),
        \end{align*}
        here each element represents a pixel of the image, that consists of the RGB-values.
        \item Convert frames into greyscale image. Define
        \begin{align*}
            f_{RGBtoGREY}:\left(\mathbb{R}^{d_x\times d_y}\right)^3&\to \{0,1,...,255\}^{d_x\times d_y}\\
            x&\mapsto f_{RGBtoGREY}(x).
        \end{align*}
        $f_{RGBtoGREY}$ assigns each pixel, based on its shade, a number between $0$ and $255$, where $0$ represents the color black and $255$ the color white.
        \item Transform the greyscale images into binary images via a threshhold function, i.e., define
        \begin{align*}
            f_{thresh,\sigma}:\{0,1,...,255\}^{d_x\times d_y}&\to \{0,255\}^{d_x\times d_y}\\
            x_{i,j}&\to
                \begin{cases}
                    0,\quad &\text{if } x_{i,j}\leq\sigma\\
                    255,\quad &\text{if } x_{i,j}>\sigma
                \end{cases},
        \end{align*}
        where $\sigma\in\{1,2,...,255\}$ indicates whether pixels are turned black ($\left(f_{thresh,\sigma}(x)\right)_{i,j}=0$) or white ($\left(f_{thresh,\sigma}(x)\right)_{i,j}=255$).
        \item Distance transformation: Define 
        \begin{align*}
            f_{dist}:\{0,255\}^{d_x\times d_y}&\to \mathbb{R}^{d_x\cdot d_y}\\
            x&\mapsto f_{dist}(x).
        \end{align*}
        Here $f_{dist}$ computes the distance of each white pixel to the nearest black pixel. The exact method to compute the distance can be found in \cite{Felzenszwalb2012}. To conclude the segmentation we vectorize the image.
    \end{enumerate}
\end{enumerate}
    We denote the composition of the operators as $\mathcal{F}=f_{dist}\circ f_{thresh,\sigma}\circ f_{RGBtoGREY}\circ f_{img}$. Then $G:=\mathcal{F}\circ \mathcal{E}$, denotes the forward operator in potential \eqref{eq_pot_reg}. Note that for the minimisation of the potential we need to also apply distance transformation to the original image. The following example illustrates one example that we consider.

    \begin{example} \label{ex:main}
    Figure \ref{fig:example_nonshopped_andshopped} illustrates an example of an original taken image, where we apply some force to the rod and afterwards apply the image segmentation $\mathcal{F}$ to it. The end result is the distance transformation for this image, i.e. our observed data $y$ that we are trying to reconstruct. We illustrate the distance map of a toy example in Figure \ref{fig:1}
        \begin{figure}[H]
\centering
\captionsetup{width=.9\linewidth}
\includegraphics[scale=0.15, trim = 1cm -3cm 0cm 1.1cm,clip]{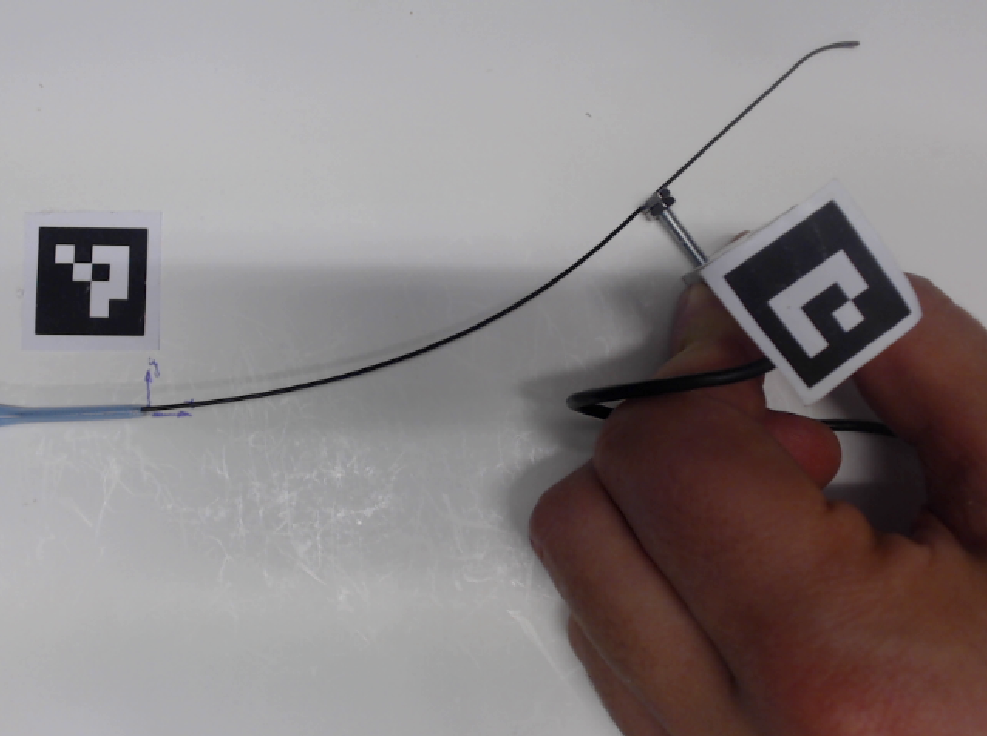}
\hspace*{0.3cm}%
\includegraphics[scale=0.4, trim = 0.5cm 0cm 0cm 1cm,clip]{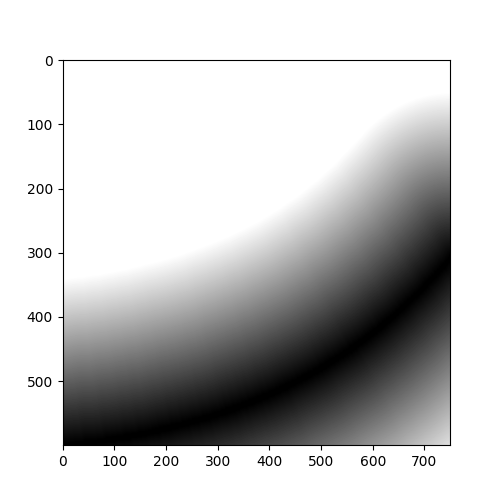}
\caption{Example of original taken image (left) with force application on to the rod and an image of the same rod after applying the transformation $\mathcal{F}$ to the original image.}
\label{fig:example_nonshopped_andshopped}
\end{figure}

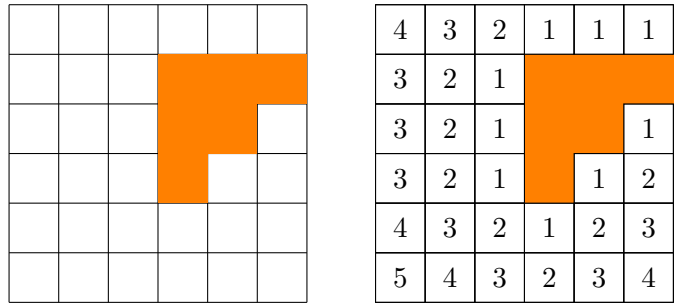
\begin{figure}[htbp]
\centering
\begin{tikzpicture}[scale=0.66]
    \foreach \y in {0,1,...,6} {
        \draw (0, \y) -- (6, \y);
    }
    
    \foreach \x in {0,1,...,6} {
        \draw (\x, 0) -- (\x, 6);
    }    
    \fill[orange] (3,4) rectangle (4,5);    
    \fill[orange] (3,3) rectangle (4,4);
    \fill[orange] (3,2) rectangle (4,3);
    \fill[orange] (4,4) rectangle (5,5);    
    \fill[orange] (4,3) rectangle (5,4);
    \fill[orange] (4,3) rectangle (5,4);
    \fill[orange] (5,4) rectangle (6,5);
\end{tikzpicture}\qquad
\begin{tikzpicture}[scale=0.66]
    \foreach \y in {0,1,...,6} {
        \draw (0, \y) -- (6, \y);
    }
    
    \foreach \x in {0,1,...,6} {
        \draw (\x, 0) -- (\x, 6);
    }    
    \fill[orange] (3,4) rectangle (4,5);    
    \fill[orange] (3,3) rectangle (4,4);
    \fill[orange] (3,2) rectangle (4,3);
    \fill[orange] (4,4) rectangle (5,5);    
    \fill[orange] (4,3) rectangle (5,4);
    \fill[orange] (4,3) rectangle (5,4);
    \fill[orange] (5,4) rectangle (6,5);

    \draw (0,0) rectangle (1,1) node[pos=.5,black] {$5$};
    \draw (1,0) rectangle (2,1) node[pos=.5,black] {$4$};
    \draw (2,0) rectangle (3,1) node[pos=.5,black] {$3$};
    \draw (3,0) rectangle (4,1) node[pos=.5,black] {$2$};
    \draw (4,0) rectangle (5,1) node[pos=.5,black] {$3$};
    \draw (5,0) rectangle (6,1) node[pos=.5,black] {$4$};
    \draw (0,1) rectangle (1,2) node[pos=.5,black] {$4$};
    \draw (1,1) rectangle (2,2) node[pos=.5,black] {$3$};
    \draw (2,1) rectangle (3,2) node[pos=.5,black] {$2$};
    \draw (3,1) rectangle (4,2) node[pos=.5,black] {$1$};
    \draw (4,1) rectangle (5,2) node[pos=.5,black] {$2$};
    \draw (5,1) rectangle (6,2) node[pos=.5,black] {$3$};
    \draw (0,2) rectangle (1,3) node[pos=.5,black] {$3$};
    \draw (1,2) rectangle (2,3) node[pos=.5,black] {$2$};
    \draw (2,2) rectangle (3,3) node[pos=.5,black] {$1$};
    \draw (4,2) rectangle (5,3) node[pos=.5,black] {$1$};
    \draw (5,2) rectangle (6,3) node[pos=.5,black] {$2$};
    \draw (0,3) rectangle (1,4) node[pos=.5,black] {$3$};
    \draw (1,3) rectangle (2,4) node[pos=.5,black] {$2$};
    \draw (2,3) rectangle (3,4) node[pos=.5,black] {$1$};
    \draw (5,3) rectangle (6,4) node[pos=.5,black] {$1$};
    \draw (0,4) rectangle (1,5) node[pos=.5,black] {$3$};
    \draw (1,4) rectangle (2,5) node[pos=.5,black] {$2$};
    \draw (2,4) rectangle (3,5) node[pos=.5,black] {$1$};
    \draw (0,5) rectangle (1,6) node[pos=.5,black] {$4$};
    \draw (1,5) rectangle (2,6) node[pos=.5,black] {$3$};
    \draw (2,5) rectangle (3,6) node[pos=.5,black] {$2$};
    \draw (3,5) rectangle (4,6) node[pos=.5,black] {$1$};
    \draw (4,5) rectangle (5,6) node[pos=.5,black] {$1$};
    \draw (5,5) rectangle (6,6) node[pos=.5,black] {$1$};

\end{tikzpicture}
\caption{Set (depicted in orange; left image) and its corresponding distance transformation (right image).}
\label{fig:1}
\end{figure}
    \end{example}

\section{Ensemble Kalman inversion (EKI)}\label{sec:eki}

We aim to find the minimiser of \eqref{eq:minreg} using the EKI, where we consider the continuous version of the EKI as introduced in  \cite{Schillings2017}.\\
We denote by $u_0 = (u_0^{(j)})_{j \in J} \in X^{\Nens}$ the initial ensemble, assuming, without loss of generality, that the family $(u_0^{(j)} - \bar{u}_0)_{j \in J}$ is linearly independent. Here, $\Nens \in \mathbb{N}$ with $\Nens \geq 2$, and $J := \{1, \ldots, \Nens\}$.

We consider an EKI approach without a stochastic component, here the particles are given by the solution of the following system of ODEs.

\begin{align} \label{EKI_basic}
    \frac{\mathrm{d} u^{(j)}(t)}{\mathrm{d}t} &= - \widehat{C}^{u G}_t \Gamma^{-1} (G(u^{(j)})(t) - y) \qquad (j \in J)\\
    u(0) &= u_0, \notag
\end{align}

where $\widehat{C}^{u G}_t$ is defined as

\[\widehat{C}^{u G}_t := \frac{1}{\Nens-1} \sum_{j = 1}^\Nens (u^{(j)}(t) - \overline{u}(t)) \otimes (G(u^{(j)})(t) - \overline{G(u)}(t)),\]

and $\overline{u}(t)$ is given by

\[\overline{G(u)}(t) = \frac{1}{\Nens}\sum_{j=1}^{\Nens}G(u^{(j)}) \qquad (t \geq 0).\]

It has been shown, that the continuous variant \eqref{EKI_basic} corresponds to a noise free limit for $t\to \infty$, i.e. overfitting will occur in the noisy case. Therefore, we include an additional regularisation term
\begin{align} \label{EKI_regul}
    \frac{\mathrm{d} u^{(j)}(t)}{\mathrm{d}t} &= - \widehat{C}^{u G}_t \Gamma^{-1} (G(u^{(j)})(t) - y)-\widehat{C}^{u}_t C_0^{-1}u^{(j)}_t \qquad (j \in J)\\
    u(0) &= u_0, \notag
\end{align}
with
$$
\widehat{C}^{u}_t := \frac{1}{\Nens-1} \sum_{j = 1}^\Nens (u^{(j)}(t) - \overline{u}(t)) \otimes (u^{(j)}(t) - \overline{u}(t)).
$$
In the nonlinear setting, variance inflation in form of a control of the mean in the observation space has been demonstrated to be crucial for the control of the nonlinearity, cf. \cite{Weissmann2022} for more details. The dynamics are then given by
\begin{align} \label{EKI_regul_vi}
    \frac{\mathrm{d} u^{(j)}(t)}{\mathrm{d}t} &= - \widehat{C}^{u G}_t\Gamma^{-1} (G(u_t^{(j)}) - y)-\widehat{C}^{u}_t C_0^{-1}u^{(j)}_t\notag\\
    &+\rho\widehat{C}^{u,G}_t \Gamma^{-1} ({G}(u_t^{(j)})-\bar G(u_t))+\rho \widehat{C}^{u}_t C_0^{-1}(u^{(j)}_t-\bar{u}_t) \qquad (j \in J)\\
    u(0) &= u_0, \notag
\end{align}
where $0\leq\rho< 1$. We can rewrite this as

\begin{align} \label{EKI_regul_vi_newnot}
    \frac{\mathrm{d} u^{(j)}(t)}{\mathrm{d}t} &= (1-\rho)\left[-\widehat{C}^{u G}_t\Gamma^{-1} (G(u_t^{(j)}) - y)-\widehat{C}^{u}_t C_0^{-1}u^{(j)}_t\right] \notag\\
    &+\rho\left[-\widehat{C}^{u G}_t\Gamma^{-1} (\bar G(u_t)- y)-\widehat{C}^{u}_t C_0^{-1}\bar{u}_t\right] \qquad (j \in J)\\
    u(0) &= u_0, \notag
\end{align}

\begin{remark}
    Note that, since $\Gamma$ is positive definite we can multiply \eqref{eq_IP} by the inverse square root of $\Gamma$ and not change the predicted solution.  For the sake of simplicity and without loss of generality, we will assume that $\Gamma$ is the identity matrix $\mathrm{Id}_k.$
\end{remark}

\subsection{Convergence analysis of EKI}
Based on the results in \cite{Weissmann2022} and assuming the convexity of the potentials, we summarize the results 
on the well-posedness and convergence properties of the EKI in the nonlinear setting as it follows: 

\begin{assumption} \label{assu:convex_lip}
    The functional $\Phi^{\scaleto{\mathrm{reg}}{5pt}}\in C^2(X,\mathbb{R}_+)$ satisfies
    \begin{enumerate}
        \item ($\mu$-strong convexity). There exists $\mu>0$ such that
            $$\Phi^{\scaleto{\mathrm{reg}}{5pt}}(x_1)-\Phi^{\scaleto{\mathrm{reg}}{5pt}}(x_2)\geq \langle \nabla \Phi^{\scaleto{\mathrm{reg}}{5pt}}(x_2),x_1-x_2\rangle+\frac{\mu}{2}\|x_1-x_2\|^2, \quad \forall\ x_1,x_2 \in X.$$
        \item ($L$-smoothness). There exists $L>0$ such that the gradient $\nabla \Phi^{\scaleto{\mathrm{reg}}{5pt}}$ satisfies:
        \[\|\nabla \Phi^{\scaleto{\mathrm{reg}}{5pt}}(x_1)-\nabla \Phi^{\scaleto{\mathrm{reg}}{5pt}}(x_2)\|\le L \|x_1-x_2\|\quad \forall\ x_1,x_2 \in X\,.  \]
    \end{enumerate}
\end{assumption}
Furthermore, we assume
\begin{assumption}\label{ass:linear_apprx}
    The forward operator $G\in C^2(X,Y)$ is locally Lipschitz continuous, with constant $c_{lip}>0$ and satisfies
        \begin{equation}\label{eqn:approx_error}
            G(x_1)=G(x_2)+DG(x_2)(x_1+x_2)+Res(x_1,x_2) \quad \forall x_1,x_2\in X,
        \end{equation}
        where $DG$ denotes the Fréchet derivative of $G$. Furthermore, the approximation error is bounded by
        \begin{equation}
            \|Res(x_1,x_2)\|_2\leq b_{res}\|x_1-x_2\|_2^2\,.
        \end{equation}
\end{assumption}

To understand the dynamics of the EKI, we start by discussing the subspace property, i.e. the EKI estimates are constrained to the span of the initial ensemble $S=\spann\{u^{(j)},j\in\{1,...,J\}\}$. To be more specific, the particles remain within the affine space $u_0^\perp +\mathcal E$ for all $t \ge 0$, as stated in \cite[Corollary 3.8]{Tong2020}. Here, $\mathcal{E}$ is defined as the span of vectors, i.e. $\mathcal{E}:=\{e^{(1)}(0), \ldots, e^{(\Nens)}(0)\}$, with $e^{(j)}=u^{(j)}-\bar u$, for $j\in\{1,\ldots,\Nens\}$ and $u_0^\perp=\bar u(0)-P_{\mathcal E} \bar u(0)$. We define $\mathcal{B}:=u_0^\perp +\mathcal E$.

\begin{theorem}[\protect{\cite{Weissmann2022}}]
Let $\{u^{(1)}(0),...,u^{(J)}(0)\}$ denote the initial ensemble. The ODE systems \eqref{EKI_regul} and \eqref{EKI_regul_vi}, admit unique global solutions $u^{(j)}(t)\in C^1([0,\infty);\mathcal{B})$ for all $j\in\{1,...,J\}$.
\end{theorem}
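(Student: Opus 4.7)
The plan is to proceed in three standard steps: local well-posedness via Picard--Lindelöf, verification of the subspace property so that the dynamics remain in the finite-dimensional affine set $\mathcal{B}$, and a global a priori bound that rules out finite-time blow-up.

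For local well-posedness, I would view the coupled system as a single ODE on $X^{\Nens}$. Under Assumption~\ref{ass:linear_apprx}, $G$ is locally Lipschitz and $C^2$, while the empirical operators $\widehat{C}^{uG}_t$ and $\widehat{C}^{u}_t$ are polynomial in the ensemble. Their composition with the linear bounded map $C_0^{-1}$ and with $G$ therefore yields a locally Lipschitz vector field, and Picard--Lindelöf gives a unique $C^1$ solution on some maximal interval $[0,T_{\max})$. For \eqref{EKI_regul_vi} the additional $\rho$-terms are of the same polynomial/composed type, so the argument is identical.

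Next, for the subspace property, I would show that the tangent space $\mathcal{E}(t) := \spann\{e^{(j)}(t)\}$ is constant in $t$ and equal to $\mathcal{E}$. Every term on the right-hand side of \eqref{EKI_regul} or \eqref{EKI_regul_vi} is of the form $(\text{scalar or vector})\cdot(u^{(k)}(t)-\bar u(t))$ because $\widehat{C}^{uG}_t$ and $\widehat{C}^{u}_t$ are sums of outer products whose left factor is $e^{(k)}(t)$. Hence $\dot u^{(j)}(t)\in\mathcal{E}(t)$, and subtracting the equations for two particles shows that $\dot e^{(j)}(t)$ also lies in $\mathcal{E}(t)$, so $\mathcal{E}(t)=\mathcal{E}(0)=\mathcal{E}$ throughout. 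Consequently $u^{(j)}(t)-u^{(j)}(0)\in\mathcal{E}$ for all $t\in[0,T_{\max})$, which yields $u^{(j)}(t)\in u_0^\perp+\mathcal{E}=\mathcal{B}$.

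Finally, to obtain global existence, I would derive an a priori bound on the ensemble inside $\mathcal{B}$. The natural Lyapunov candidate is $V(t)=\sum_{j=1}^{\Nens}\Phi^{\scaleto{\mathrm{reg}}{5pt}}(u^{(j)}(t))$, or equivalently $W(t)=\sum_j \|u^{(j)}(t)\|^2$, which is equivalent to $V(t)$ up to constants thanks to the $\mu$-strong convexity in Assumption~\ref{assu:convex_lip}. Differentiating $V$ along \eqref{EKI_regul} and using the linearisation \eqref{eqn:approx_error} to rewrite $\widehat{C}^{uG}_t\Gamma^{-1}(G(u^{(j)})-y)+\widehat{C}^{u}_t C_0^{-1}u^{(j)}$ as $\widehat{C}^u_t\nabla\Phi^{\scaleto{\mathrm{reg}}{5pt}}(u^{(j)})$ plus a residual controlled by $b_{\mathrm{res}}$ gives a differential inequality of the form $\dot V(t)\le c_1+c_2 V(t)$. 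Grönwall then bounds $V(t)$, hence $\|u^{(j)}(t)\|$, on every finite interval, so $T_{\max}=\infty$. The variance inflation terms in \eqref{EKI_regul_vi} only add further contractive contributions in the mean direction, so the same bound applies.

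The main obstacle is the third step: the nonlinear residual in \eqref{eqn:approx_error} is only quadratic in $\|u^{(j)}-u^{(k)}\|$, so one must be careful that the regularisation dissipation $-\widehat{C}^u_t C_0^{-1}u^{(j)}$ genuinely dominates this residual in the Lyapunov estimate. This is exactly where the Tikhonov term and the finite-dimensional confinement to $\mathcal{B}$ (which keeps $\widehat{C}^u_t$ bounded in terms of $V(t)$) enter decisively; without either ingredient the a priori bound would fail.
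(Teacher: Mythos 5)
The paper does not prove this statement itself; it is quoted from \cite{Weissmann2022}, and your three-step plan (Picard--Lindel\"of for a locally Lipschitz vector field on $X^{\Nens}$, invariance of the affine subspace $\mathcal{B}$, a priori bound ruling out blow-up) is exactly the standard route taken there and in the earlier EKI well-posedness literature. Steps one and two are fine, though step two is cleaner if you argue invariance directly: whenever all particles lie in $\mathcal{B}$, the ranges of $\widehat{C}^{uG}_t$ and $\widehat{C}^{u}_t$ are contained in $\mathcal{E}$, so the vector field is tangent to $\mathcal{B}^{\Nens}$ and uniqueness gives invariance; you only need $\mathcal{E}(t)\subseteq\mathcal{E}$, not equality of the spans for all $t$.

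The one substantive imprecision is the order of operations in step three. You propose to control the linearisation residual by noting that confinement to $\mathcal{B}$ ``keeps $\widehat{C}^u_t$ bounded in terms of $V(t)$.'' That is not enough to close the Gr\"onwall argument: the residual between the EKI drift and the preconditioned gradient $\widehat{C}^u_t\nabla\Phi^{\scaleto{\mathrm{reg}}{5pt}}(u^{(j)})$ scales like $\sqrt{\smash[b]{\Phi(u^{(j)})}}\,V_e(t)^{3/2}$ (this is the content of the lemma from \cite{Weissmann2022} invoked later in the paper), so if you only bound the spread $V_e$ by $V$ itself, the differential inequality for $V$ becomes superlinear (of order $V^{5/2}$) and Gr\"onwall yields only local-in-time control. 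The missing ingredient is the unconditional monotonicity of the ensemble spread: summing $\tfrac{d}{dt}\tfrac12\|e^{(j)}\|^2$ over $j$ gives $-(\Nens-1)\,\mathrm{tr}\bigl(\widehat{C}^{uG}_t(\widehat{C}^{uG}_t)^{\top}\bigr)-(\Nens-1)\,\mathrm{tr}\bigl(C_0^{-1/2}\widehat{C}^{u}_t\widehat{C}^{u}_tC_0^{-1/2}\bigr)\le 0$, so $V_e(t)\le V_e(0)$ for all $t$ with no assumptions on $G$ beyond what is needed for local existence. With this established first, the residual is linear in $V$, your inequality $\dot V\le c_1+c_2V$ is correct, and coercivity of $\Phi^{\scaleto{\mathrm{reg}}{5pt}}$ (via the Tikhonov term) converts the bound on $V$ into a bound on the particles, giving $T_{\max}=\infty$. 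With that reordering your argument is sound for both \eqref{EKI_regul} and, via the convex-combination form \eqref{EKI_regul_vi_newnot}, for \eqref{EKI_regul_vi}.
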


Therefore, the best possible solution that we can obtain through the EKI is given by best approximation in $u_0^\perp +\mathcal E$. We summarize in the following the convergence results for the various variants. 

\begin{theorem}[\protect{\cite{Weissmann2022}}]\label{thm:conver_simon}
    Suppose Assumptions \ref{assu:convex_lip} and \ref{ass:linear_apprx} are satisfied. Furthermore we define $V_e(t)=\frac{1}{J}\sum_{j=1}^J\frac{1}{2}\|e^{(j)}_t\|^2$, where $e^{(j)}_t=u^{(j)}_t-\bar{u}_t$. Let $j\in\{1,...,J\}$ and $u^{(j)}(t)$ be the solutions of \eqref{EKI_regul} (or respectively \eqref{EKI_regul_vi}) then it holds
    \begin{enumerate}
        \item The rate of the ensemble collapse is given by
        \[V_e(t) \in \mathcal O(t^{-1}).\]
        \item The smallest eigenvalue of the empirical covariance matrix $\widehat{C}^{u}_t$ remains strictly positive in the subspace $\mathcal{B}$, i.e. let
        \[\eta_0 = \min_{z\in B, \|z\|=1}\langle z,C(u_0),z\rangle >0.\]
        Then it holds for each $z\in\mathcal{B}$ with $\|z\|=1$
        \[ \langle z, \widehat{C}(u_t)z\rangle \ge \frac{1}{(1-\rho)mt+\eta_0},\]
        where $m>0$ depends on the eigenvalues of $\widehat{C}^{u}_0$ and $\Gamma$ and the Lipschitz constant $c_{lip}$ and $0\leq\rho<1$.
        \item Let $u^*$ be the unique minimiser of \eqref{eq:minreg} in $\mathcal{B}$ then it holds
        \[\frac{1}{J}\sum_{j=1}^J \Phi^{\scaleto{\mathrm{reg}}{5pt}}\left(u^{(j)}_t\right)-\Phi^{\scaleto{\mathrm{reg}}{5pt}}\left(u^*\right)\leq\left(\frac{c_1}{t+c_2}\right)^{\frac{1}{\alpha}}\]
        where $0<\alpha<(1-\rho)\frac{L}{\mu}(\sigma_{max}+c_{lip}\lambda_{max}\|C_0\|_{HS})$. Here $\sigma_{max}$ denotes the largest eigenvalue of $C_0^{-1}$, $\lambda_{max}$ denotes the larges eigenvalue of $\Gamma^{-1}$, $\|C_0\|_{HS}$ denotes the Hilbert-Schmidt norm, $0\leq\rho<1$ and $c_1,c_2>0$ depends on the constants from our assumptions \ref{assu:convex_lip} and \ref{ass:linear_apprx}.
    \end{enumerate}
\end{theorem}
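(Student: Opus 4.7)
The plan is to mirror the strategy for the linear case, using Assumption~\ref{ass:linear_apprx} to treat the nonlinear forward map as an affine perturbation of its Fréchet derivative. I would first derive evolution equations for the deviations $e^{(j)}_t = u^{(j)}_t - \bar u_t$ from \eqref{EKI_regul_vi_newnot}: averaging the right-hand side over $j$ produces the ODE for $\bar u_t$, and subtracting shows that the $\rho$-inflation terms (which depend only on $\bar u_t$ and $\bar G(u_t)$) cancel. This leaves the clean deviation flow
\[
\frac{\mathrm d e^{(j)}_t}{\mathrm d t} = -(1-\rho)\Bigl[\widehat C^{uG}_t\Gamma^{-1}\bigl(G(u^{(j)}_t) - \bar G(u_t)\bigr) + \widehat C^u_t C_0^{-1} e^{(j)}_t\Bigr],
\]
which drives the analysis of all three claims.

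For part (i), I would differentiate $V_e$ along the deviation flow. Using the covariance identities $(J-1)\widehat C^u_t = \sum_j e^{(j)}_t\otimes e^{(j)}_t$ and $(J-1)\widehat C^{uG}_t = \sum_j e^{(j)}_t\otimes(G(u^{(j)}_t) - \bar G(u_t))$, together with \eqref{eqn:approx_error} to linearize $G$ around $\bar u_t$, the derivative splits into two nonnegative quadratic forms in the $e^{(j)}_t$'s (from the data misfit and the prior) plus a higher-order residual of order $b_{res}$. A Cauchy--Schwarz estimate bounds the quadratic forms from below by a multiple of $V_e^2$, yielding $\dot V_e \le -c\,V_e^2$ up to subleading terms, and a standard comparison argument gives $V_e(t)=\mathcal O(t^{-1})$.

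For part (ii), for a fixed unit $z\in\mathcal B$, I would differentiate $\langle z, \widehat C^u_t z\rangle$. The Leibniz rule combined with the deviation flow produces a time derivative of the form $-2(1-\rho)[\langle z, \widehat C^{uG}_t\Gamma^{-1}(\widehat C^{uG}_t)^{\top} z\rangle + \langle z, \widehat C^u_t C_0^{-1}\widehat C^u_t z\rangle]$. Using Assumption~\ref{ass:linear_apprx} to express $\widehat C^{uG}_t$ as $\widehat C^u_t DG(\bar u_t)^{\top}$ up to a residual, each bracketed term is bounded above by a multiple of $\langle z, \widehat C^u_t z\rangle^2$, with the constant assembled from $c_{lip}$, $\lambda_{\max}$, $\|C_0^{-1}\|$ and the eigenvalues of $\widehat C^u_0$. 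This yields the differential inequality
\[
\frac{\mathrm d}{\mathrm d t}\langle z, \widehat C^u_t z\rangle \ge -(1-\rho)\,m\,\langle z, \widehat C^u_t z\rangle^2,
\]
and separation of variables delivers the stated $1/((1-\rho)mt + \eta_0)$ lower bound.

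Part (iii) is the main obstacle. The idea is to read the averaged dynamics as a preconditioned gradient flow on $\Phi^{\scaleto{\mathrm{reg}}{5pt}}$: invoking Assumption~\ref{ass:linear_apprx}, one can replace $\widehat C^{uG}_t$ by $\widehat C^u_t DG(u^{(j)}_t)^{\top}$ modulo a residual of order $b_{res}$, so that the right-hand side of \eqref{EKI_regul_vi_newnot} rewrites as $-\widehat C^u_t\nabla\Phi^{\scaleto{\mathrm{reg}}{5pt}}(u^{(j)}_t)$ plus an inflation correction and a quadratic remainder. Differentiating $D(t):=\tfrac{1}{J}\sum_j\Phi^{\scaleto{\mathrm{reg}}{5pt}}(u^{(j)}_t) - \Phi^{\scaleto{\mathrm{reg}}{5pt}}(u^*)$ along the flow, $L$-smoothness absorbs the second-order terms while $\mu$-strong convexity supplies the Polyak--Łojasiewicz inequality $\|\nabla\Phi^{\scaleto{\mathrm{reg}}{5pt}}\|^2 \ge 2\mu\,D$. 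Combined with the eigenvalue lower bound from (ii), this leads to a Grönwall-type inequality $\dot D \le -\alpha\,D/(t+c_2)$ with $\alpha$ the exponent in the statement, and integration delivers $D(t) \le (c_1/(t+c_2))^{1/\alpha}$. The hard part is tracking the quadratic residuals from Assumption~\ref{ass:linear_apprx} together with the cross-terms produced by inflation: one must use the collapse rate from (i) to argue that these contributions are strictly subleading so that they do not degrade the Grönwall exponent.
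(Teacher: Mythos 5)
The paper itself gives no proof of this theorem --- it is imported wholesale from the cited reference --- so the comparison is against the argument given there. Your architecture for parts (i) and (iii) reproduces it: the $\rho$-inflation terms do cancel in the deviation ODE exactly as you write, the rank-one decomposition of the covariances plus the linearization of Assumption~\ref{ass:linear_apprx} yields $\dot V_e \le -cV_e^2$ and hence the $\mathcal O(t^{-1})$ collapse, and part (iii) is indeed obtained by reading the dynamics as a preconditioned gradient flow, combining the Polyak--{\L}ojasiewicz inequality from $\mu$-strong convexity with the covariance lower bound of (ii), and using the collapse rate of (i) to show the $b_{res}$-residuals and inflation cross-terms are subleading --- the bookkeeping you correctly flag as the hard part.

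The genuine gap is in part (ii). The differential inequality $\frac{\mathrm d}{\mathrm dt}\langle z,\widehat{C}^{u}_t z\rangle \ge -(1-\rho)\,m\,\langle z,\widehat{C}^{u}_t z\rangle^2$ does not follow from your estimate: the terms you must control have the form $\langle z,\widehat{C}^{u}_t M_t \widehat{C}^{u}_t z\rangle \le \|M_t\|\,\langle z,(\widehat{C}^{u}_t)^2 z\rangle$, and $\langle z,C^2z\rangle$ is \emph{not} dominated by a constant multiple of $\langle z,Cz\rangle^2$ --- take $C=\mathrm{diag}(\delta^{-1},\delta)$ and $z=(\delta,\sqrt{1-\delta^2})$, for which $\langle z,C^2z\rangle\approx 1$ while $\langle z,Cz\rangle^2\approx 4\delta^2$. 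The best Cauchy--Schwarz gives on the rank-one decomposition is $\|\widehat{C}^{u}_t z\|^2 \le c\,V_e(t)\,\langle z,\widehat{C}^{u}_t z\rangle$, which after integration only yields a power-law lower bound with an exponent you do not control, not the stated $1/((1-\rho)mt+\eta_0)$ rate. The correct route --- and the one used in the reference, going back to the linear analysis --- is to differentiate the \emph{inverse} quadratic form $\langle z,(\widehat{C}^{u}_t)^{-1}z\rangle$ on $\mathcal{B}$, where the covariance stays invertible: since the leading term of $\dot{\widehat{C}}^{u}_t$ is $-2(1-\rho)\widehat{C}^{u}_t\bigl(DG^{\top}\Gamma^{-1}DG + C_0^{-1}\bigr)\widehat{C}^{u}_t$ up to residuals, the identity $\frac{\mathrm d}{\mathrm dt}(\widehat{C}^{u}_t)^{-1}=-(\widehat{C}^{u}_t)^{-1}\dot{\widehat{C}}^{u}_t(\widehat{C}^{u}_t)^{-1}$ cancels the outer covariances and leaves $\frac{\mathrm d}{\mathrm dt}\langle z,(\widehat{C}^{u}_t)^{-1}z\rangle \le (1-\rho)m$ with $m$ assembled from $c_{lip}$, $\Gamma^{-1}$ and $C_0^{-1}$; integrating and inverting gives the claimed bound (with $\eta_0^{-1}$, rather than $\eta_0$, in the denominator --- the statement as printed is dimensionally inconsistent at $t=0$). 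Once (ii) is repaired this way, everything downstream in your part (iii) goes through.
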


\section{Subsampling in EKI} \label{sec:subsampling}
If $\Nobs$ is very large, as it will be the case for high-resolution images, it might be computationally infeasible to use the EKI framework to solve the inverse problem \eqref{eq_IP}. Therefore, we employ a subsampling strategy. This strategy involves splitting the data $y$ into $\Nsub$ subsets, denoted as $y_1,\ldots,y_\Nsub$, where $(y_1,\ldots,y_\Nsub) = y, \Nsub\in\{2,3,...\}$ and we use an index set $I := \{1,\ldots,\Nsub\}$ to represent these subsets. We define "data subspaces" $Y_1,\ldots, Y_\Nsub$ to accommodate this data partitioning, resulting in $Y := \prod_{i \in I} Y_i$. Regarding the noise we assume the existence of covariance matrices $\Gamma_i : Y_i \rightarrow Y_i$, for each $i \in I$, which collectively form a block-diagonal structure within $\Gamma$:
$$
\Gamma = \begin{pmatrix} \Gamma_1 & & & \\ & \Gamma_2 & & \\ & & \ddots & \\ & & & \Gamma_\Nsub \end{pmatrix}.
$$
Finally, we split the operator $G$ into a family of operators $(G_i)_{i \in I}$ obtaining the family of inverse problems

\begin{align*}
    G_1(u) + \eta_1 &= y_1 \notag\\
    \vdots& \\
     G_\Nsub(u) + \eta_\Nsub &= y_\Nsub \notag,
\end{align*}
where $\eta_i$ is a realisation of the a Gaussian random variable with zero-mean and covariance matrix $\Gamma_i$.

Similar to above we assume $\Gamma_i$ is the identity matrix $\mathrm{Id}_{ki}$ for the remaining discussion. Then for our analysis we consider the family of potentials
\begin{align*}
 \Phi_i(u) &:= \frac{1}{2} \|G_i(u) - y_i \|^2 \qquad (i \in I).
\end{align*}
When adding regularisation we can define the following entities to obtain a compact representation. We set $C_0=\frac{\Nsub}{\alpha}D_0$ and define
\begin{equation}
\tilde G_i(u)=\begin{pmatrix}G_i(u) \\ C_0^{-\frac12}u\end{pmatrix},\quad  \tilde y=\begin{pmatrix}y \\ 0\end{pmatrix}\notag
\end{equation}
Then we obtain the family of regularised potentials
\begin{equation}
\Phi_i^{\scaleto{\mathrm{reg}}{5pt}}(u)=\frac 12 \|\tilde y_i-\tilde G_i(u)\|^2,\quad (i \in I).\notag
\end{equation}
Note that we scale the regularisation parameter by $\Nsub^{-1}$ so that we obtain.
\begin{equation}
\Phi^{\scaleto{\mathrm{reg}}{5pt}}(u)=\sum_{i=1}^\Nsub\Phi_i^{\scaleto{\mathrm{reg}}{5pt}}(u).\notag
\end{equation}

 We will consider the right hand sides of the ODEs \eqref{EKI_regul} and \eqref{EKI_regul_vi} but replace $G$ with $G_i$ and respectively $y$ with $y_i$ randomly. This method was introduced in \cite{Hanu_2023} and is known as single-subsampling. We illustrate the idea in figure \ref{fig:cartoon_subsamp}.
 For the analysis of the subsampling scheme, we will assume the same convexity assumptions on the sub-potentials $\Phi_i$ as well as the same regularity assumptions on the family of forward operators $G_i$ for all $i\in I$.

\begin{assumption}\label{assu:subpotentials_gradients}
    The families of potentials $(\Phi_i,i\in I)$ and $(G_i,i\in I)$ satisfy assumptions \ref{assu:convex_lip} and \ref{ass:linear_apprx}.
\end{assumption}

\begin{figure}
\centering
\input{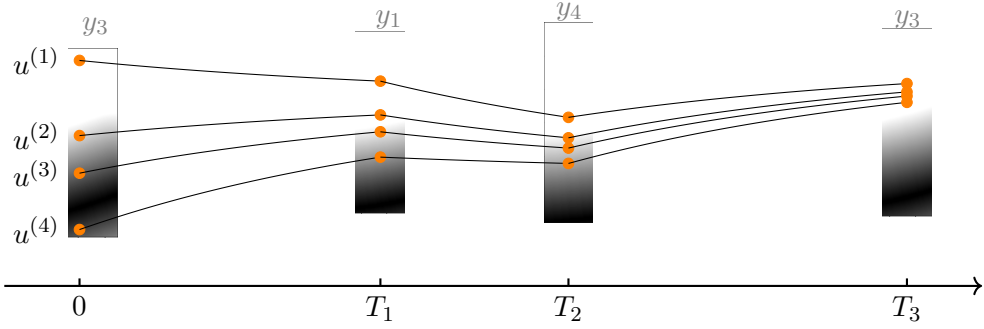}
\caption{Illustration of EKI with subsampling. Here we took the distance transformation from image \ref{fig:example_nonshopped_andshopped} and split the image horizontally into four subimages. Then at each time point where the data is changed, the particles see a different image until the next time when the data is changed.}
\label{fig:cartoon_subsamp}
\end{figure}

Then the flows that we are going to consider are given by
\begin{align} \label{EKI_regul_sub}
    \frac{\mathrm{d} u^{(j)}(t)}{\mathrm{d}t} &= - \widehat{C}^{u G}_t (G_{\bsi(t)}(u^{(j)})(t) - y_{\bsi(t)})-\widehat{C}^{u}_t C_0^{-1}u^{(j)}_t \qquad (j \in J)\\
    u(0) &= u_0, \notag
\end{align}
when we only consider regularisation. When we also incorporate variance inflation we use
\begin{align} \label{EKI_regul_vi_sub}
    \frac{\mathrm{d} u^{(j)}(t)}{\mathrm{d}t} &= (1-\rho)\left[-\widehat{C}^{u G}_t (G_{\bsi(t)}(u_t^{(j)}) - y_{\bsi(t)})-\widehat{C}^{u}_t C_0^{-1}u^{(j)}_t\right] \notag\\
    &+\rho\left[-\widehat{C}^{u G}_t (\bar G_{\bsi(t)}(u_t)- y_{\bsi(t)})-\widehat{C}^{u}_t C_0^{-1}\bar{u}_t\right] \qquad (j \in J)\\
    u(0) &= u_0, \notag
\end{align}
where $0\leq\rho< 1$. Here ${\bsi(t)}$ denotes an index process that determines which subset we are considering at which time points. The analysis of subsampling in continuous time as well as the definition of the index process will be introduced in the following subsection \ref{subsec:cont_time}, where we summarize the results of \cite{Latz2021}. Note that the empirical covariance in case of subsampling is given by

$$
\widehat{C}^{u G}_t := \frac{1}{\Nens-1} \sum_{j = 1}^\Nens (u^{(j)}(t) - \overline{u}(t)) \otimes (G_{\bsi(t)}(u^{(j)})(t) - \overline{G_{\bsi(t)}}(u)(t)),
$$

\subsection{Convergence analysis of EKI with subsampling} \label{subsec:cont_time}

To determine which subsample we use when solving the EKI we consider a continuous-time Markov process (CTMP) $\bsi: [0, \infty) \times \Omega \rightarrow I$ on $I$. This process is a piecewise constant process that randomly changes states at random times given by the distribution of $\Delta$. The CTMP has initial distribution $\bsi(0) \sim \mathrm{Unif}(I)$ and transition rate matrix
\begin{equation} 
    \label{eq_transition_rate_mat} A(t) := \frac{1}{(\Nsub-1)\eta(t)}\begin{pmatrix} 1 &  \cdots & 1 \\ \vdots & \ddots & \vdots \\ 1 & \cdots & 1 \end{pmatrix} - \frac{\Nsub}{(\Nsub-1)\eta(t)} \cdot \mathrm{id}_I, \qquad (t \geq 0)
\end{equation}
where $\eta: [0, \infty) \rightarrow (0, \infty)$ is the \emph{learning rate}, which is continuously differentiable and bounded from above.\\
Algorithm~\ref{algo} describes how we sample from $(\bsi(t))_{t \geq 0}$.

\begin{algorithm} 
\caption{Sampling $(\bsi(t))_{t \geq 0}$}\label{algo}
\begin{algorithmic}[1]
   \State draw $\bsi(0) \sim \mathrm{Unif}(I)$ and $t_0 \leftarrow  0$
      \State sample $\Delta$ with survival function $$\mathbb{P}(\Delta \geq t|t_0)  :=  \mathbf{1}[t <0 ] + \exp\left( - \int_{0}^t\eta(u + t_0)^{-1}\mathrm{d}u \right) \qquad (t \in [-\infty, \infty])$$
      \State set $\bsi|_{(t_0, t_0+ \Delta)} \leftarrow \bsi(t_0)$ 
      \State draw $\bsi(t_0 + \Delta) \sim \mathrm{Unif}(I \backslash \{\bsi(t_0)\})$
   \State increment $t_0 \leftarrow t_0 + \Delta$ and go to 2
\end{algorithmic}
\end{algorithm}

For a more detailed analysis of this particular CTMP $(\bsi(t))_{t \geq 0}$ we refer to \cite{Latz2021}. For other characterisations we refer the to \cite{Anderson1991,Gillespie}.\\

Next we define a stochastic approximation process.

\begin{definition} 
Let $(\mathbf{F}_i,i \in I): X \times [0, \infty) \rightarrow X$ be a family Lipschitz continuous functions. Then the tuple $(\bsi(t), u(t))_{t \geq 0}$ consisting of the family of flows $(\mathbf{F}_i)_{i \in I}$ and the index process $(\bsi(t))_{t \geq 0})$ that satisfies
\begin{align*}
    \dot{u}(t) &=  - \mathbf{F}_{\bsi(t)}(u(t),t) \qquad (t > 0) \\
    u(0) &= u_0 \in X,
\end{align*}
is defined as \emph{stochastic approximation process}.
\end{definition}

Furthermore, we define $\hPsi = \sum_{i \in I}\mathbf{F}_i/{\Nsub}$ and introduce the flow
\begin{align*}
    \dot{u}(t) &=  - \hPsi(u(t),t) \qquad (t > 0) \\
    u(0) &= u_0 \in X,
\end{align*}

To analyse the asymtotic behaviour of the process we need the following:

\begin{assumption} \label{Assum_conv} Let $d \in \mathbb{N}$ and $X := \mathbb{R}^d$. For any $i \in I$ assume:
\begin{itemize}
    \item[(i)] $\mathbf{F}_i \in C^1(X\times [0, \infty),X)$,
    \item[(ii)]there exists a measurable function $h: [0, \infty) \rightarrow \mathbb{R}$, with $\int_0^\infty h(t) \mathrm{d}t = \infty$ such that the flow $\varphi_t^{(i)}$ satisfies
    $$
     \langle \mathbf{F}_{i}(\varphi_t^{(i)}(u_0),t)- \mathbf{F}_{i}(\varphi_t^{(i)}(u_1),t), \varphi_t^{(i)}(u_0) - \varphi_t^{(i)}(u_1) \rangle_X \leq  -h(t) \| \varphi_t^{(i)}(u_0) - \varphi_t^{(i)}(u_1)\|^2
    $$
    for any two initial values $u_0, u_1 \in X$.
    
\end{itemize}
\end{assumption}
By Assumption~\ref{Assum_conv}(ii) we obtain that the flow of $-\hPsi$ is exponentially contracting. Hence, by the Banach fixed-point theorem, the flow has a unique stationary point $u^* \in X$. The main result of \cite{Latz2021} shows that the stochastic process converges to the unique stationary point $u^*$ of the flow $(\overline{\varphi}_t)_{t \geq 0}$ and is summarized below.

\begin{theorem}\label{thm_gen_Latz21} Consider the stochastic approximation process $(\bsi(t), u(t))_{t \geq 0}$, which is initialised with $(i_0, u_0) \in I \times X$. Furthermore, let Assumption~\ref{Assum_conv} hold and that the learning rate satisfies, $\lim_{t \rightarrow \infty}\eta(t) = 0$. Then
$$
\lim_{t \rightarrow \infty}\mathrm{d}_{\rm W}\left(\delta(\cdot - u^*), \mathbb{P}(u(t) \in \cdot | u_0, i_0)\right) = 0,
$$

where $\mathrm{d}_{\rm W}$ denotes the Wasserstein distance, i.e.
$$
\mathrm{d}_{\rm W}(\pi, \pi') = \inf_{H \in C(\pi, \pi')} \int_{X \times X} \min\{1, \|u - u'\|^q \} \mathrm{d}H(u, u'),
$$
where $q \in (0,1]$ and $C(\pi, \pi')$ denotes the set of couplings of the probability measures $\pi, \pi'$ on $(X, \mathcal{B}X)$.
\end{theorem}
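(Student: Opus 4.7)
The plan is to combine two ingredients: first, the averaged deterministic flow driven by $\hPsi$ converges to $u^*$ exponentially fast thanks to one-sided contractivity; second, as the learning rate $\eta(t)\to 0$, the stochastic approximation process concentrates around this averaged flow because the CTMP $\bsi(t)$ mixes on the time-scale $\eta(t)$. A triangle inequality in Wasserstein distance then yields the claim.

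As a preparatory step, I would verify that averaging preserves the monotonicity required to identify $u^*$. Since each $\mathbf{F}_i$ satisfies Assumption~\ref{Assum_conv}(ii) with the same rate $h$, their arithmetic mean $\hPsi$ satisfies
$$\langle \hPsi(v_0,t)-\hPsi(v_1,t),v_0-v_1\rangle \leq -h(t)\|v_0-v_1\|^2,$$
and together with $\int_0^\infty h(t)\,\mathrm{d}t=\infty$, a Grönwall argument gives the existence of a unique stationary point $u^*$ and the bound $\|\overline{\varphi}_t(v_0)-u^*\|^2\leq \|v_0-u^*\|^2\exp(-2\int_0^t h(s)\,\mathrm{d}s)$, which vanishes as $t\to\infty$. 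This handles the deterministic side.

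The harder part is quantifying the discrepancy between $u(t)$ and $\overline{\varphi}_t(u_0)$. Following the strategy of \cite{Latz2021}, I would use a synchronous coupling starting from the same initial datum $u_0$ and decompose
$$\mathbf{F}_{\bsi(t)}(u(t),t) = \hPsi(u(t),t) + \bigl(\mathbf{F}_{\bsi(t)}(u(t),t)-\hPsi(u(t),t)\bigr).$$
The first summand provides contraction toward the deterministic flow by the averaged monotonicity inequality above, while the second is a mean-zero fluctuation with respect to the stationary uniform distribution of $\bsi$. Using the representation of the CTMP as a pure jump process with waiting times governed by $\eta(t)$, together with an integration-by-parts over inter-jump intervals, one can show that the cumulative fluctuation over any horizon is controlled by a term proportional to $\eta(t)$. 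Combined with $\eta(t)\to 0$, this forces $\mathbb{E}\min\{1,\|u(t)-\overline{\varphi}_t(u_0)\|^q\}\to 0$, and consequently
$$\mathrm{d}_{\rm W}\bigl(\delta(\cdot-u^*),\mathbb{P}(u(t)\in\cdot)\bigr) \leq \mathbb{E}\min\{1,\|u(t)-\overline{\varphi}_t(u_0)\|^q\} + \min\{1,\|\overline{\varphi}_t(u_0)-u^*\|^q\} \longrightarrow 0.$$

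The main obstacle is the fluctuation estimate: the vector fields $\mathbf{F}_i(\cdot,t)$ are not a priori globally bounded, so one has to propagate moment bounds on $u(t)$ uniformly in time to close the argument. These moment bounds themselves follow from the one-sided Lipschitz condition in Assumption~\ref{Assum_conv}(ii) — effectively the same mechanism that gives contraction toward $u^*$ also keeps trajectories from escaping. The interplay between the mixing rate of $\bsi$ (of order $\eta(t)^{-1}$) and the contraction rate $h(t)$ must be balanced carefully; this is precisely the role of the assumption $\eta(t)\to 0$, ensuring that the time-averaged drift experienced by $u(t)$ collapses to $\hPsi$ faster than the process can drift away. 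The remaining steps are standard and mirror the proof of the main convergence result in \cite{Latz2021}, with the only adaptation being the nonautonomous dependence $\mathbf{F}_i(\cdot,t)$, which is accommodated by the explicit $t$-dependence already allowed in the assumption.
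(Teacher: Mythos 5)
The paper does not prove this theorem at all: it is quoted from the literature, with the proof deferred to \cite[Theorem~A.1]{Hanu_2023} (building on \cite{Latz2021}). So your sketch is being measured against the cited argument, and its skeleton is the right one: contraction of the averaged flow $\hPsi$ to the unique stationary point $u^*$, a bound on the discrepancy between the stochastic approximation process and something deterministic, and a triangle inequality in $\mathrm{d}_{\rm W}$.

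The genuine gap is the fluctuation estimate, which is the entire technical content of the theorem. You decompose $\mathbf{F}_{\bsi(t)}(u(t),t)=\hPsi(u(t),t)+\bigl(\mathbf{F}_{\bsi(t)}(u(t),t)-\hPsi(u(t),t)\bigr)$ and argue that the second term is ``mean-zero with respect to the stationary uniform distribution of $\bsi$'' so that its cumulative effect is $O(\eta(t))$. But $u(t)$ is adapted to the index process, so conditionally on the trajectory of $u$ the fluctuation is \emph{not} mean-zero, and an integration by parts over inter-jump intervals does not by itself close the argument; ``proportional to $\eta(t)$'' is asserted, not derived. The cited proofs do not compare directly to the averaged flow in this way: they couple two copies of the pair process $(\bsi(t),u(t))$, partition $[0,\infty)$ into epochs on which $\eta$ is essentially constant, invoke the exponential ergodicity of the CTMP (with rate of order $\eta^{-1}$) on each epoch together with a perturbation bound over single inter-jump intervals, and propagate a Wasserstein contraction across epochs. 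That machinery, together with the uniform-in-time moment bounds you correctly identify as necessary, is what is missing. Two smaller points: Assumption~\ref{Assum_conv}(ii) as literally written gives $\langle \mathbf{F}_i(a,t)-\mathbf{F}_i(b,t),a-b\rangle\le -h(t)\|a-b\|^2$, which for the flow $\dot u=-\mathbf{F}_i(u,t)$ yields \emph{expansion} when $h>0$; your Gr\"onwall bound $\exp(-2\int_0^t h(s)\,\mathrm{d}s)$ tacitly uses the opposite (contraction-compatible) sign, which is also the orientation the paper uses later before Theorem~\ref{thm:ss_novi} --- state the inequality in that form before averaging. Also, the assumption is posed only along trajectories of the individual flows $\varphi^{(i)}_t$, so passing to the averaged field $\hPsi$ evaluated along $\overline{\varphi}_t$ needs the pointwise version of the monotonicity condition, which you should assume explicitly.
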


For the proof we refer to \protect{\cite[Theorem A.1]{Hanu_2023}}.

To analyse convergence of our subsampling scheme we verify that the gradient flow satisfies Assumptions \ref{Assum_conv}. Then the result follows by theorem \ref{thm_gen_Latz21}. Condition $(ii)$ is hereby essential. We consider the scaled left hand side
\begin{align*}
-\frac{1}{\Nens}\langle u_1 -  u_2, \mathbf{F}_i(u_1,t) - \mathbf{F}_i(u_2,t) \rangle \leq -h(t) \| u_1 - u_2\|^2,
\end{align*}
for $t$ large enough, where $- \mathbf{F}_i(u(t),t)$ denotes the right hand side of the systems \eqref{EKI_regul_sub} and \eqref{EKI_regul_vi_sub} and $h: [0, \infty) \rightarrow \mathbb{R}$ being a measurable function. In order to derive convergence results in the parameter space, we will focus in the following on the regularised setting, i.e. we consider the potential $\Phi^{\scaleto{\mathrm{reg}}{5pt}}$ and only consider the variance inflated flow \eqref{EKI_regul_vi_sub}.

\begin{theorem}\label{thm:ss_novi}
Let Assumption \ref{assu:subpotentials_gradients} be satisfied and $\Nens>d$. Furthermore, let $(\bsi(t))_{t \geq 0})$ be an index process and assume $(u^{(j)}(t))_{t \geq 0, j \in J}$ satisfies \eqref{EKI_regul_sub} (or respectively \eqref{EKI_regul_vi_sub}), and $\alpha>2$ in theorem \ref{thm:conver_simon}. Then the stochastic approximation process $(\bsi(t), u^{(j)}(t))_{t \geq 0, j \in J}$ satisfies
$$
\lim_{t \rightarrow \infty}\mathrm{d}_{\rm W}\left(\delta(\cdot - u^*), \mathbb{P}(u^{(j)}(t) \in \cdot | u_0, i_0)\right) = 0 \qquad (j \in J).$$
\end{theorem}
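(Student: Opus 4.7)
The plan is to deduce Theorem~\ref{thm:ss_novi} as a direct application of Theorem~\ref{thm_gen_Latz21}: the learning rate $\eta(t)$ is already assumed to vanish by construction of Algorithm~\ref{algo}, and what remains is to verify Assumption~\ref{Assum_conv} for the right-hand side $-\mathbf{F}_i$ of the flow \eqref{EKI_regul_vi_sub} on the ensemble state space $X^{\Nens}$. Item (i), $\mathbf{F}_i\in C^1(X^{\Nens}\times[0,\infty),X^{\Nens})$, follows immediately: each $G_i$ is $C^2$ by Assumption~\ref{ass:linear_apprx} (inherited through Assumption~\ref{assu:subpotentials_gradients}), and the empirical covariances $\widehat C^{uG}_t$ and $\widehat C^u_t$ are polynomial functions of the ensemble, so $\mathbf{F}_i$ inherits $C^1$ regularity. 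The content of the proof is item (ii), for which I plan to exhibit an $h(t)\sim c/t$ with $c>0$ so that $\int^\infty h(t)\,dt=\infty$.

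For (ii) I would rewrite the vector field in gradient form using Assumption~\ref{ass:linear_apprx}. The Taylor expansion \eqref{eqn:approx_error} gives
\[
\widehat C^{uG}_t \;=\; \widehat C^{u}_t\, DG_i(\bar u_t)^{\top} + R_t,
\qquad \|R_t\|\;\le\; c_{\mathrm{res}}\cdot V_e(t),
\]
so that, dropping the ensemble-mean terms that are common to all particles, the principal part of \eqref{EKI_regul_vi_sub} is the preconditioned gradient system $\dot u^{(j)}=-\widehat C^{u}_t\,\nabla\Phi_i^{\scaleto{\mathrm{reg}}{5pt}}(u^{(j)})$ plus a mean-driven $\rho$-term. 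Testing the difference of two ensemble trajectories $u_1,u_2\in X^{\Nens}$ (in the product inner product, scaled by $1/\Nens$ as in the paper) and using $\mu$-strong convexity of each $\Phi_i^{\scaleto{\mathrm{reg}}{5pt}}$ from Assumption~\ref{assu:convex_lip} yields
\[
-\frac{1}{\Nens}\bigl\langle u_1-u_2,\; \mathbf{F}_i(u_1,t)-\mathbf{F}_i(u_2,t)\bigr\rangle
\;\le\; -\,\mu\,\lambda_{\min}\!\bigl(\widehat C^{u}_t|_{\mathcal B}\bigr)\,\|u_1-u_2\|^2 + \mathcal E(t)\|u_1-u_2\|^2,
\]
where $\mathcal E(t)$ collects the contributions of $R_t$ and of the variance-inflation (mean) term.

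The lower bound on the spectrum of $\widehat C^{u}_t$ is supplied by Theorem~\ref{thm:conver_simon}(ii), which combined with the subspace property (the flow preserves $\mathcal B$) gives $\lambda_{\min}(\widehat C^{u}_t|_{\mathcal B})\ge 1/((1-\rho)mt+\eta_0)$. This produces the desired contraction rate of order $1/t$. I then need to show that $\mathcal E(t)=o(1/t)$ so that for $t$ large enough the net coefficient is still bounded below by, say, $\mu/(2((1-\rho)mt+\eta_0))$, which is integrable to $+\infty$. Here $\mathcal E(t)$ is a product of the ensemble spread (giving $V_e(t)\in\mathcal O(1/t)$ via Theorem~\ref{thm:conver_simon}(i)) and the distance of the particles from the optimiser $u^\ast$. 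The latter is precisely where the hypothesis $\alpha>2$ enters: by Theorem~\ref{thm:conver_simon}(iii) together with strong convexity, $\|u^{(j)}_t-u^\ast\|^2\in\mathcal O(t^{-1/\alpha})$, and for $\alpha>2$ the product with $V_e(t)^{1/2}$ decays strictly faster than $1/t$, so the remainder is negligible against the leading contraction.

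The main obstacle I foresee is exactly this bookkeeping for $\mathcal E(t)$: cleanly separating the contributions of the quadratic residual $\mathrm{Res}$, the $\rho$-mean term in \eqref{EKI_regul_vi_sub}, and the $C_0^{-1}u^{(j)}$ regulariser, and tracking that each decays faster than $1/t$ using only Theorem~\ref{thm:conver_simon}. The restriction $\Nens>d$ is used to ensure that after the initial transient the ensemble spans the relevant subspace so that the lower bound from (ii) of that theorem applies in all directions we need. Once $\mathcal E(t)=o(1/t)$ is established and hence Assumption~\ref{Assum_conv}(ii) holds with $h(t)\sim c/t$, Theorem~\ref{thm_gen_Latz21} delivers the Wasserstein convergence $\mathrm d_{\mathrm W}(\delta(\cdot-u^\ast),\mathbb P(u^{(j)}(t)\in\cdot\mid u_0,i_0))\to 0$ for every $j\in J$, completing the proof.
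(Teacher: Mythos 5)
Your plan follows essentially the same route as the paper: verify Assumption~\ref{Assum_conv}(ii) by comparing the subsampled EKI flow to a preconditioned gradient flow, extract the $1/t$ contraction from $\mu$-strong convexity together with the spectral lower bound on $\widehat C^{u}_t$ from Theorem~\ref{thm:conver_simon}(ii), use $\alpha>2$ to show the residual terms decay strictly faster than $1/t$, and then invoke Theorem~\ref{thm_gen_Latz21}. Your bound on the linearisation error via \protect{\cite[Lemma 4.5]{Weissmann2022}}-type estimates and the role you assign to $\Nens>d$ and to the ensemble collapse rate $V_e(t)\in\mathcal O(1/t)$ all match the paper's argument.

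One term is missing from your bookkeeping for $\mathcal E(t)$, and it is not covered by the residual $R_t$ or the $\rho$-mean term: the two coupled trajectories $u_1$ and $u_2$ carry \emph{different} empirical covariances $\widehat C^{u_1}_t$ and $\widehat C^{u_2}_t$, so the quantity you must control is $\langle u_1-u_2,\,\widehat C^{u_1}\nabla\mathbf{\Phi}_i^{\scaleto{\mathrm{reg}}{5pt}}(u_1)-\widehat C^{u_2}\nabla\mathbf{\Phi}_i^{\scaleto{\mathrm{reg}}{5pt}}(u_2)\rangle$, not a difference of gradients under a single preconditioner. Strong convexity alone does not dispose of the cross term $\widehat C^{u_1}\bigl(I-(\widehat C^{u_1})^{-1}\widehat C^{u_2}\bigr)\nabla\mathbf{\Phi}_i^{\scaleto{\mathrm{reg}}{5pt}}(u_2)$. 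The paper handles this by passing to the mean-field limit and assuming $\Nens$ is large enough that $(I-(\widehat C^{u_1})^{-1}\widehat C^{u_2})$ is negligible; your write-up should either include this step or an equivalent control of the covariance mismatch, otherwise the claimed inequality with $h(t)\sim c/t$ does not follow.
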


\begin{proof}
    Note that we obtain \eqref{EKI_regul_sub} from \eqref{EKI_regul_vi_sub} by choosing $\rho=0$, thus, we will focus on \eqref{EKI_regul_vi_sub}.\\
    Let $u_1$ and $u_2$ be two coupled process with initial values $u_1(0),u_2(0).$ We want to show the existence of a function $h: [0, \infty) \rightarrow \mathbb{R}$ with $\int_0^\infty h(t) \mathrm{d}t = \infty$ such that
\begin{align*}
-\frac{1}{\Nens}\langle u_1 -  u_2, \mathbf{F}_i(u_1,t) - \mathbf{F}_i(u_2,t) \rangle \leq -h(t) \| u_1 - u_2\|^2,
\end{align*}

Note that due to the strong convexity of $\Phi^{\scaleto{\mathrm{reg}}{5pt}}$ we obtain from theorem \ref{thm:conver_simon} $u^{(j)}\to u^*$ with rate $\mathcal{O}(t^{-\frac{1}{\alpha}})$, i.e.

\[\|u^{(j)}_t-u^*\|\in \mathcal{O}(t^{-\frac{1}{\alpha}}) \quad \forall t\geq0 \quad \forall i\in\{1,...,\Nsub\}.\]

Furthermore, by theorem \ref{thm:conver_simon} we also obtain that $\Phi_i^{\scaleto{\mathrm{reg}}{5pt}}(u)$ is bounded, i.e., there exists a $B>0$ such that
\[\|\Phi_i^{\scaleto{\mathrm{reg}}{5pt}}(u)\|_2\leq B \quad \forall u\in\mathbb{R}^d.\]

Since $u_1$ and $u_2$ are column vectors consisting of the stacked particle vectors we need to introduce new variables to represent \eqref{EKI_regul_vi_sub} in vectorized notion.
We define for all $i\in\{1,...,\Nsub\}$ the operators $\mathcal{G}_i:X^J\rightarrow \mathbb{R}^{d\times J}$, $u\to \left[G_i(u),...,G_i(u)\right]^T\in\mathbb{R}^{k\Nens}$ and $\bar{\mathcal{G}_i}:X^J\rightarrow \mathbb{R}^{k\Nens}$, $u\to \left[\bar{G_i}(u),...,\bar{G_i}(u)\right]^T\in\mathbb{R}^{k\Nens}$, Moreover, we set $ \mathbf{\widehat C_u}=diag\{\widehat C_u,\widehat C_u,...,\widehat C_u\}\in\mathbb{R}^{d\Nens\times d\Nens}, \mathbf{\widehat C^{uG}_i}=diag\{\widehat C^{uG}_i,\widehat C^{uG}_i,...,\widehat C^{uG}_i\}\in\mathbb{R}^{d\Nens\times d\Nens},\mathbf{C_0^{-1}}=diag\{C_0^{-1},C_0^{-1},...,C_0^{-1}\}\in\mathbb{R}^{d\Nens\times d\Nens}$ and $\mathbf{y_i}=\left[y_i,y_i,\cdots,y_i\right]^T\in\mathbb{R}^{k\Nens}.$ For the potential we define

\[\mathbf{\Phi}_i(u)=\frac{1}{2} \|\mathbf{y_i} - \mathcal{G}_i(u)\|^2=\frac{1}{2}\sum_{j=1}^\Nens \|y_i - G_i(u^{(j)})\|^2.\]
Thus, we have
\begin{align}
    &-\frac{1}{\Nens}\langle u_1 -  u_2, \mathbf{F}_i(u_1,t) - \mathbf{F}_i(u_2,t) \rangle\notag\\
    =&-\frac{1-\rho}{\Nens}\langle u_1 -  u_2, \mathbf{\widehat C^{uG}_i} (\mathcal{G}_i(u_1)(t) - \mathbf{y_i})+\mathbf{\widehat{C}^{u_1}} \mathbf{C_0^{-1}}u_1(t) \rangle\label{proof:1}\\
    &+\frac{1-\rho}{\Nens}\langle u_1 -  u_2, \mathbf{\widehat C^{uG}_i} (\mathcal{G}_i(u_2)(t) - \mathbf{y_i})+\mathbf{\widehat{C}^{u_2}} \mathbf{C_0^{-1}}u_2(t) \rangle\label{proof:2}\\
    &-\frac{\rho}{\Nens}\langle u_1 -  u_2, \mathbf{\widehat C^{uG}_i} (\bar{\mathcal{G}_i}(u_1)(t) - \mathbf{y_i})+\mathbf{\widehat{C}^{u_1}} \mathbf{C_0^{-1}}u_1(t) \rangle\label{proof:3}\\
    &+\frac{\rho}{\Nens}\langle u_1 -  u_2, \mathbf{\widehat C^{uG}_i} (\bar{\mathcal{G}_i}(u_2)(t) - \mathbf{y_i})+\mathbf{\widehat{C}^{u_2}} \mathbf{C_0^{-1}}u_2(t) \rangle\label{proof:4}.
\end{align}
We will focus for now on \eqref{proof:1} and \eqref{proof:2}. Equations \eqref{proof:3} and \eqref{proof:4} can be analysed similarly.\\ We exploit in the following the fact that we can estimate the difference of the EKI flow to a preconditioned gradient flow. 
Adding $-\mathbf{\widehat{C}^{u_1}}\nabla\mathbf{\Phi}_i(u_1)+\mathbf{\widehat{C}^{u_1}}\nabla\mathbf{\Phi}_i(u_1)$ and 
$-\mathbf{\widehat{C}^{u_2}}\nabla\mathbf{\Phi}_i(u_2)+\mathbf{\widehat{C}^{u_2}}\nabla\mathbf{\Phi}_i(u_2)$ yields
\begin{align*}
 &-\frac{1}{\Nens}\langle u_1 -  u_2, \mathbf{F}_i(u_1,t) - \mathbf{F}_i(u_2,t) \rangle\\
    =&-\frac{1-\rho}{\Nens}\langle u_1 -  u_2, \mathbf{\widehat C^{uG}_i} (\mathcal{G}_i(u_1)(t) - \mathbf{y_i})-\mathbf{\widehat{C}^{u_1}}\nabla\mathbf{\Phi}_i(u_1)\rangle\\
    &-\frac{1-\rho}{\Nens}\langle u_1 -  u_2, \mathbf{\widehat{C}^{u_1}} \mathbf{C_0^{-1}}u_1(t)+\mathbf{\widehat{C}^{u_1}}\nabla\mathbf{\Phi}_i(u_1) \rangle\\
    &+\frac{1-\rho}{\Nens}\langle u_1 -  u_2, \mathbf{\widehat C^{uG}_i} (\mathcal{G}_i(u_2)(t) - \mathbf{y_i})-\mathbf{\widehat{C}^{u_2}}\nabla\mathbf{\Phi}_i(u_2)\rangle\\
    &+\frac{1-\rho}{\Nens}\langle u_1 -  u_2, \mathbf{\widehat{C}^{u_2}} \mathbf{C_0^{-1}}u_2(t)+\mathbf{\widehat{C}^{u_2}}\nabla\mathbf{\Phi}_i(u_2) \rangle.
\end{align*}
The first term satisfies
\begin{align*}
    &-\frac{1-\rho}{\Nens}\langle u_1 -  u_2, \mathbf{\widehat C^{uG}_i} (\mathcal{G}_i(u_1)(t) - \mathbf{y_i})-\mathbf{\widehat{C}^{u_1}}\nabla\mathbf{\Phi}_i(u_1)\rangle\\
    \leq & \frac{1-\rho}{\Nens}|\langle u_1 -  u_2, \mathbf{\widehat C^{uG}_i} (\mathcal{G}_i(u_1)(t) - \mathbf{y_i})-\mathbf{\widehat{C}^{u_1}}\nabla\mathbf{\Phi}_i(u_1)\rangle|\\    
    \leq & \frac{1-\rho}{\Nens}\|u_1 -  u_2\|_2 \|\mathbf{\widehat C^{uG}_i} (\mathcal{G}_i(u_1)(t) -  \mathbf{y_i})-\mathbf{\widehat{C}^{u_1}}\nabla\mathbf{\Phi}_i(u_1)\|_2
\end{align*}

Next we use a result from \protect{\cite[Lemma 4.5]{Weissmann2022}}. With this we can bound the second norm and obtain
\begin{align}  
    & \frac{1-\rho}{\Nens}\|u_1 -  u_2\|_2 \|\mathbf{\widehat C^{uG}_i} (\mathcal{G}_i(u_1)(t) - \mathbf{y_i})-\mathbf{\widehat{C}^{u_1}}\nabla\mathbf{\Phi}_i(u_1)\|_2\notag\\
    \leq &\frac{1-\rho}{\Nens}\sum_{j=1}^\Nens \|u_1^{(j)} -  u_2^{(j)}\|_2 \|\widehat C^{uG}_i (G_i(u_1^{(j)})(t) - y_i)-\widehat{C}^{u_1}\nabla\Phi_i(u_1^{(j)})\|_2\notag\\
    \leq &(1-\rho)\sum_{j=1}^\Nens \|u_1^{(j)} -  u_2^{(j)}\|_2 b_1 \sqrt{\Phi(u_1^{(j)})}V_{e1}(t)^{\frac{3}{2}}\in\mathcal{O}\left(t^{-\frac{3\alpha+2}{2\alpha}}\right)\label{proof:first_term}.
\end{align}
Respectively we can do the same for the third term.\\
For the second and fourth term we obtain
\begin{align*}
    &-\frac{1-\rho}{\Nens}\langle u_1 -  u_2, \mathbf{\widehat{C}^{u_1}} \mathbf{C_0^{-1}}u_1(t)+\mathbf{\widehat{C}^{u_1}}\nabla\mathbf{\Phi}_i(u_1) \rangle\\
    &+\frac{1-\rho}{\Nens}\langle u_1 -  u_2, \mathbf{\widehat{C}^{u_2}} \mathbf{C_0^{-1}}u_2(t)+\mathbf{\widehat{C}^{u_2}}\nabla\mathbf{\Phi}_i(u_2) \rangle\\
    =&-\frac{1-\rho}{\Nens}\langle u_1 -  u_2, \mathbf{\widehat{C}^{u_1}} \nabla\mathbf{\Phi}_i^{\scaleto{\mathrm{reg}}{5pt}}(u_1)\rangle+\frac{1-\rho}{\Nens}\langle u_1 -  u_2, \mathbf{\widehat{C}^{u_2}} \nabla\mathbf{\Phi}_i^{\scaleto{\mathrm{reg}}{5pt}}(u_2) \rangle\\
    =&-\frac{1-\rho}{\Nens}\langle u_1 -  u_2, \mathbf{\widehat{C}^{u_1}} \nabla\mathbf{\Phi}_i^{\scaleto{\mathrm{reg}}{5pt}}(u_1)-\mathbf{\widehat{C}^{u_2}} \nabla\mathbf{\Phi}_i^{\scaleto{\mathrm{reg}}{5pt}}(u_2)\rangle
\end{align*}
To bound the term
\begin{align*}
    -\frac{1-\rho}{\Nens}\langle u_1 -  u_2, \mathbf{\widehat{C}^{u_1}} \nabla\mathbf{\Phi}_i^{\scaleto{\mathrm{reg}}{5pt}}(u_1)-\mathbf{\widehat{C}^{u_2}} \nabla\mathbf{\Phi}_i^{\scaleto{\mathrm{reg}}{5pt}}(u_2)\rangle
\end{align*}
we take a mean-field approach \cite{doi:10.1137/21M1414000,doi:10.1137/19M1251655}, i.e. under suitable assumptions, the sample covariance has a well-defined limit $C(t)$, where $C(t)$ is symmetric, positive definite for all $t\ge 0$. Thus, by splitting
\begin{align*}
    &-\frac{1-\rho}{\Nens}\langle u_1 -  u_2, \mathbf{\widehat{C}^{u_1}} \nabla\mathbf{\Phi}_i^{\scaleto{\mathrm{reg}}{5pt}}(u_1)-\mathbf{\widehat{C}^{u_2}} \nabla\mathbf{\Phi}_i^{\scaleto{\mathrm{reg}}{5pt}}(u_2)\rangle\\&=  -\frac{1-\rho}{\Nens}\langle u_1 -  u_2, \mathbf{\widehat{C}^{u_1}} \nabla\mathbf{\Phi}_i^{\scaleto{\mathrm{reg}}{5pt}}(u_1)- \mathbf{\widehat{C}^{u_1}}  \nabla\mathbf{\Phi}_i^{\scaleto{\mathrm{reg}}{5pt}}(u_2)\rangle\\
    &+ -\frac{1-\rho}{\Nens}\langle u_1 -  u_2, \mathbf{\widehat{C}^{u_1}} (I-(\mathbf{\widehat{C}^{u_1}})^{-1}\mathbf{\widehat{C}^{u_2}})\nabla\mathbf{\Phi}_i^{\scaleto{\mathrm{reg}}{5pt}}(u_2)\rangle,
\end{align*}
the factor $(I-(\mathbf{\widehat{C}^{u_1}})^{-1}\mathbf{\widehat{C}^{u_2}})$ can be made arbitrarily small by adjusting $\Nens$. We therefore assume, that $\Nens$ is chosen large enough such that this term is negligible. Then by $\mu$ strong convexity we obtain the following upper bound for the first term
\begin{align*}
    &=  -\frac{1-\rho}{\Nens}\langle u_1 -  u_2, \mathbf{\widehat{C}^{u_1}} \nabla\mathbf{\Phi}_i^{\scaleto{\mathrm{reg}}{5pt}}(u_1)- \mathbf{\widehat{C}^{u_1}}  \nabla\mathbf{\Phi}_i^{\scaleto{\mathrm{reg}}{5pt}}(u_2)\rangle\\
    &\leq -\frac{1-\rho}{\Nens} \lambda_{min}(\widehat{C}^{u_1})\mu\|u_1 -  u_2\|^2 \in \mathcal{O}\left(t^{-\frac{2+\alpha}{\alpha}}\right),
\end{align*}
since $\lambda_{min}(\widehat{C}^{u_1})\in\mathcal{O}\left(t^{-1}\right)$ by Theorem \ref{thm:conver_simon}. Moreover, since $\alpha>2$, we have $-\frac{2+\alpha}{\alpha}>-\frac{3\alpha+2}{2\alpha}$ and therefore the term \eqref{proof:first_term} vanishes faster then the latter term. \\
The terms \eqref{proof:3} and \eqref{proof:4} can be bounded using the same arguments as well as \protect{\cite[Lemma 6.4]{Weissmann2022}}. The upper bound for these two together is
\[-\frac{\rho}{\Nens} \lambda_{min}(\widehat{C}^{u_1})\mu\|u_1 -  u_2\|^2 \in \mathcal{O}\left(t^{-\frac{2+\alpha}{\alpha}}\right).\]
All together we obtain
\begin{align*}
    -\frac{1}{\Nens}\langle u_1 -  u_2, \mathbf{F}_i(u_1,t) - \mathbf{F}_i(u_2,t) \rangle \leq -h(t) \| u_1 - u_2\|^2,
\end{align*}
where $h(t)=\frac{\mu}{\Nens}\lambda_{min}(\widehat{C}^{u_1})$ with $\int_0^\infty h(t) \mathrm{d}t = \infty$, since $\lambda_{min}(\widehat{C}^{u_1})\in\mathcal{O}\left(t^{-1}\right)$.

\end{proof}

\section{Numerical experiments}\label{sec:numeric}
We consider the image given in example \ref{ex:main} for the reconstruction and use the EKI \eqref{EKI_regul_vi} and the suggested EKI with subsampling scheme \eqref{EKI_regul_vi_sub} to estimate the unknown parameters (density $\rho$ and energy dissipation $E$ as discussed in subsection \ref{subsec:optim}), i.e. $d=2$. To ensure that the EKI is searching a solution in the full space $\mathbb{R}^d$ we consider $\Nens=3$. Furthermore, we represent the image as $705\times555$ pixel image, this means that the observation space is given by $Y=\mathbb{R}^{705\times555}$. We solve the ODEs \eqref{EKI_regul_vi} and \eqref{EKI_regul_vi_sub} up until time $T=10000$ with $\rho=0$ in \eqref{EKI_regul_vi} and respectively  \eqref{EKI_regul_vi_sub}. For the subsampling strategy a linear decaying learning rate $\eta(t)=(at+b)^{-1}$, where $a=b=10$ is considered. To account for the minimal step size of the ODE solver, the learning rate $\eta(t)$ only considered until time $T_{sub}=10$. Afterwards we consider a fixed amount of switching times. This results in approximately $600$ data switches until time $T_{sub}=10$. For the remaining time we switched the data $1000$ times. As illustrated in Figure \ref{fig:cartoon_subsamp} we split the data horizontally into $\Nsub=5$ subsets.

\begin{figure}
\centering
        \includegraphics[width=0.7\linewidth]{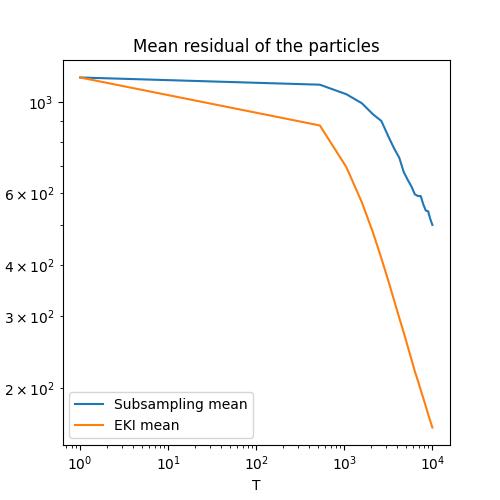}
\caption{Mean residuals of the particles, i.e. $\frac{1}{\Nens}\sum_{j=1}^\Nens \Phi^{\scaleto{\mathrm{reg}}{5pt}}(u^{(j)})$}
\label{fig:19_residuals_mean}
\end{figure}

Figure \ref{fig:19_residuals_mean} depicts the mean residuals of the solutions obtained by the EKI and our subsampling scheme. We can see that asymptotically both methods converge with the same rate. However, keep in mind the subsampling approach is computationally cheaper, since we only need to consider the lower-dimensional ODE \eqref{EKI_regul_vi_sub} at each point in time.

\begin{figure}[H]
        \includegraphics[width=0.49\linewidth]{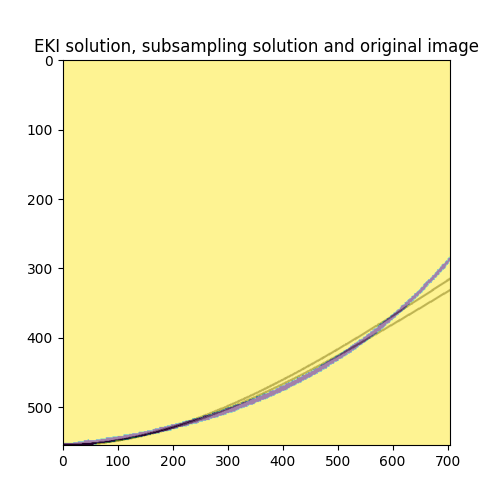}
        \includegraphics[width=0.49\linewidth]{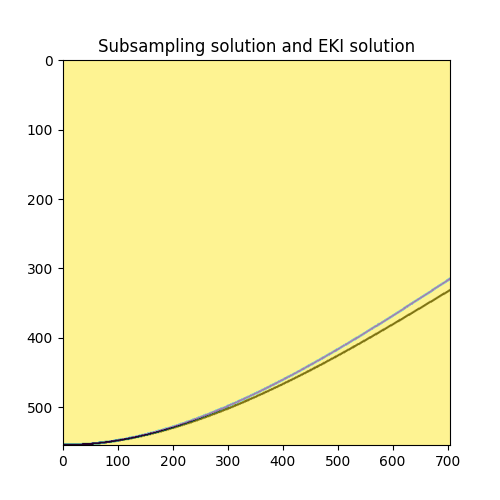}

\caption{Comparison of the best computed solutions to the original image. The blue rod depicts the rod from the original image. The thinner rods depict the solutions computed via the EKI (upper rod) and our subsampling approach (lower rod). The right image illustrates the EKI solution (upper rod) and the subsampling solution (lower rod). The $x$ and $y$ axis represent the pixels of the image.}
\label{fig:19_comparision of computed methods}
\end{figure}

Figure \ref{fig:19_comparision of computed methods} depicts the computed solutions in comparison to the original image. We can see that the solutions computed from both methods approximate the rod in the original image quite well. One can see that the desired solution seems to be more elastic than the solutions computed via the EKI and our subsampling approach, which is due to model error. We observe that the subsampling strategy leads to a good solution while reducing the computational costs significantly.

\section{Conclusions} \label{sec:conclusion}
We have introduced a formulation of a guide wire system with estimation of the unknown parameters by a subsampling version of EKI using high resolution images as data. The experiment with real data shows promising results; the subsampling strategy could achieve a good accuracy while reducing the computational costs significantly. In future work, we will explore this direction further to enable uncertainty quantification in the state estimation, thus making a robust control of the guide wire possible. 

\section*{Data availability}
Data for the conducted numerical experiments is available in the repository: \url{https://github.com/matei1996/EKI_subsampling_guidewire}

\section*{Acknowledgements}
MH and CS are grateful for the support from MATH+ project EF1-19: Machine Learning Enhanced Filtering Methods for Inverse Problems and EF1-20: Uncertainty Quantification and Design of Experiment for Data-Driven Control, funded by the Deutsche Forschungsgemeinschaft (DFG, German Research
Foundation) under Germany's Excellence Strategy – The Berlin Mathematics
Research Center MATH+ (EXC-2046/1, project ID: 390685689). The authors of this publication also thank the University of Heidelberg's Excellence Strategy (Field of Focus 2) for the funding of the project "Ariadne- A new approach for automated catheter control" that allowed the research and development of the control system of the catheter.

\section*{Authors' contributions}
GK designed the forward model and planned the numerical experiments. JM and JS carried out the experimental setup, designing an electrical drive system as well generating data for the numerical experiments. JH designed the data segmentation in the forword model and helped to draft the manuscript. CS introduced and analysed the subsampling scheme for the EKI as well as draft the manuscript. MH introduced and analysed the subsampling scheme for the EKI, carried out the numerical experiments as well as draft the manuscript. All authors read and approved the final manuscript.

\section*{Competing Interests}
The authors declare that they have no competing interests.

\bibliographystyle{siam}
\bibliography{main}   

\end{document}